\theoremstyle{plain}
\newtheorem{theorem}{Theorem}[section]
\newtheorem{corollary}[theorem]{Corollary}
\newtheorem{lemma}[theorem]{Lemma}
\newtheorem{proposition}[theorem]{Proposition}
\theoremstyle{definition}
\newtheorem{definition}[theorem]{Definition}
\newtheorem{remark}[theorem]{Remark}
\theoremstyle{remark}
\begin{document}
\title{A Schreier domain type condition II}

\author{Zaheer Ahmad, Tiberiu Dumitrescu and Mihai Epure}
\address{Abdus Salam School of
Mathematical Sciences,
GC University Lahore, 68-B New
Muslim Town, Lahore 54600, Pakistan
}\email{zaheeir@yahoo.com}

\address{Facultatea de Matematica si Informatica,
University of Bucharest,
14 A\-ca\-de\-mi\-ei Str., Bucharest, RO 010014,
Romania}
\email{tiberiu@fmi.unibuc.ro, tiberiu\_dumitrescu2003@yahoo.com}

\address{Simion Stoilow Institute of Mathematics of the Romanian Academy
Research unit 5, P. O. Box 1-764, RO-014700 Bucharest, Romania}
\email{epuremihai@yahoo.com}

\thanks{2000 Mathematics Subject Classification: Primary 13A15, Secondary 13F05.}
\thanks{Key words and phrases: Schreier domain, Star operation,  Pr\"ufer domain.}

\begin{abstract}
\noindent
For an integral domain $D$ and a star operation $*$ on $D$, we study the  following condition: whenever $I \supseteq AB$ with $I$, $A$, $B$  nonzero ideals, there exist nonzero ideals $H$ and $J$ such that $I^{*}=(HJ)^{*}$, $H^{*}\supseteq A$ and $J^{*}\supseteq B$.
\end{abstract}

\maketitle
%
%

\section{Introduction.}

In \cite{C}, Cohn introduced  the  notion of Schreier domain.  A domain $D$ is said to be a {\em Schreier domain} if $(1)$ $D$ is integrally closed and $(2)$  whenever $I, J_1, J_2$ are principal ideals of $D$ and $I\supseteq J_1J_2$, then  $I = I_1I_2$ for some principal ideals $I_1,I_2$ of $D$ with $I_i \supseteq J_i$ for $i = 1,2$.  The study of Schreier domains was continued in \cite{MR} and \cite{Zps}. In  \cite{Zps}, a domain was called  a {\em pre-Schreier domain} if it satisfies  condition $(2)$ above.
Subsequently, extensions of the ``(pre)-Schreier domain'' concept were studied in \cite{DM}, \cite{ADZ}, \cite{DK}, \cite{AD} and \cite{ADE}.

In \cite{ADE}, we studied a class of domains that satisfies a Schreier-like condition for all ideals. More precisely, a domain $D$ was called  a {\em sharp domain } if whenever $I \supseteq AB$ with $I$, $A$, $B$  nonzero ideals of $D$, there exist  ideals $A'\supseteq A$ and $B'\supseteq B$ such that $I=A'B'$. We recall  several results from \cite{ADE}.
If the domain $D$ is  Noetherian or Krull, then $D$ is sharp if and only if $D$ is a  Dedekind domain \cite[Corollaries 2 and 12]{ADE}. A sharp domain is  pseudo-Dedekind; in particular, a sharp domain is a completely integrally closed GGCD domain \cite[Proposition 4]{ADE}.
Recall (cf. \cite{Z} and \cite{AK}) that a domain $D$ is called a {\em pseudo-Dedekind domain} (the name used in \cite{Z}  was {\em generalized Dedekind domain}) if the $v$-closure of each nonzero ideal of $D$ is invertible. Also, recall from \cite{AA} that a  domain $D$ is called a {\em generalized GCD domain (GGCD domain)} if  the $v$-closure of each nonzero finitely generated ideal of $D$ is invertible. The definition of the $v$-closure is recalled below.
A valuation domain is sharp if and only if the value group  of $D$ is  a complete subgroup of the reals \cite[Proposition 6]{ADE}.
The localizations of a  sharp domain at the maximal ideals are valuation domains with value group a complete subgroup of the reals; in par\-ti\-cu\-lar, a sharp domain is a Pr\"ufer domain of dimension at most one  \cite[Theorem 11]{ADE}.
The converse is true for the domains of finite character  \cite[Theorem 15]{ADE}, but not true in general \cite[Example 13]{ADE} (recall that a {\em domain of finite character} is a domain whose nonzero elements are contained in only finitely many maximal ideals).  A countable sharp domain is a Dedekind domain \cite[Corollary 17]{ADE}.

The purpose of this paper is to study the ``sharp domain''  concept in the  star operation setting. To facilitate the reading of the  paper, we first review some basic facts about $*$-operations. Let $D$ be a domain with quotient field $K$ and let $F(D)$ denote the set of nonzero fractional ideals of $D$.
A function       $A\mapsto A^*: F(D) \rightarrow F(D)$ is called a {\em star operation} on
$D$ if $*$ satisfies the following three conditions for all $0
\neq a \in K$ and all $I,J \in F(D)$:

$(1)$ $D^{*} = D$ and
$(aI)^{*}=aI^{*}$,

$(2)$ $I \subseteq I^{*}$ and if $ I
\subseteq J$, then $I^{*} \subseteq J^{*}$,

$(3)$
$(I^{*})^{*} = I^{*}.$

\noindent An ideal $I \in F(D)$ is called a $*$-ideal if $I = I^{*}.$
For all $I,J\in F(D)$, we have
$(IJ)^{*}=(I^{*}J)^{*}=(I^{*}J^{*})^{*}$.
These equations define the so-called {\em $*$-multiplication.}
If $\{I_\alpha\}$ is a subset of $F(D)$ such that $\cap I_\alpha\neq 0$, then
$\cap I_\alpha^*$ is a $*$-ideal.
Also, if $\{I_\alpha\}$ is a subset of $F(D)$ such that $\sum I_\alpha$ is a fractional ideal, then
$(\sum I_\alpha)^*=(\sum I_\alpha^*)^*$.
The star operation $*$ is said to be  {\em stable}  if  $(I \cap J)^{*} = I^{*}\cap J^{*}$ for all $I,J\in F(D)$.
If $*$ is a star operation, the function $*_f: F(D) \rightarrow F(D)$ given by $I^{*_f} = \cup_H H^{*}$, where $H$ ranges over all
nonzero finitely generated subideals of $I$, is also a star operation. The star operation $*$ is said to be of
{\em finite character} if $*=*_f$. Clearly $(*_f)_f=*_f$.
Denote by $Max_*(D)$  the set of maximal $*$-ideals,  that is, ideals maximal among proper integral
$*$-ideals of $D$. Every maximal $*$-ideal is a prime ideal.
The {\em $*$-dimension} of $D$ is
$sup \{ n\mid 0\subset P_1\subset \cdots \subset P_n,$  $P_i$  prime $*$-ideal of $D\}$.
Assume that $*$ is a star operation of finite character. Then every proper $*$-ideal is contained in some maximal $*$-ideal, and
the map $I\mapsto I^{\tilde{*}} = \cap _{P \in Max_*(D)}ID_P$ for all $I\in F(D)$ is a stable star operation of finite character, cf. \cite[Theorems 2.4, 2.5 and 2.9]{AC}.
Moreover, $*$ is stable if and only if $*=\tilde{*}$, cf. \cite[Corollary 4.2]{A}.
A $*$-ideal $I$ is of {\em finite type} if
$I=(a_1,...,a_n)^{*}$ for some $a_1,...,a_n\in I.$
A {\em Mori domain} is a domain whose $t$-ideals are of finite type (see \cite{B}). By \cite{HZ}, an integral domain is said to be a {\em TV domain} if every $t$-ideal is a $v$-ideal. A Mori domain is a TV domain.

A fractional ideal $I \in F(D)$ is said to be {\em $*$-invertible} if
$(II^{-1})^{*} = D$, where $I^{-1}=(D:I)=\{ x \in K \mid xI \subseteq D\}$.
If $*$ is of finite character, then $I$ is $*$-invertible if and only if
$II^{-1}$ is not contained in any maximal $*$-ideal of $D$; in this case $I^*=(a_1,...,a_n)^*$ for some $a_1,...,a_n\in I$.
%
%
Let $*_1,*_2$ be star operations on $D$. We write $*_1\leq *_2$, if $I^{*_1}\subseteq I^{*_2}$ for all
$I\in F(D)$. In this case we get $(I^{*_1})^{*_2}=I^{*_2}=(I^{*_2})^{*_1}$ and every
$*_1$-invertible ideal is $*_2$-invertible.
Some well-known star operations are: the {\em $d$-operation} (given by $I\mapsto I$),
the {\em $v$-operation} (given  by $I\mapsto I_v = (I^{-1})^{-1}$) and   the {\em $t$-operation} (defined by $t=v_f$).
The {\em $w$-operation} is the star operation given by $I \mapsto I_w= \{x\in K \mid xH \subseteq
I$ for some finitely
generated ideal $H$ of $D$ with $H^{-1} =D\}$. The $w$-operation is a stable star operation of finite character.
For   an integrally closed domain $D$, the {\em $b$-operation} on $D$ is the star operation  defined by $I\mapsto I_b=\cap _{V} IV$  where $V$ runs in the set of all valuation overrings of $D$ (see \cite[Page 398]{G}). For every $I\in F(D)$, we have
$I\subseteq I_w \subseteq I_t \subseteq I_v$. It is known that $Max_w(D)=Max_t(D)$, cf.
\cite[Corollaries 2.13 and 2.17]{AC} and $I_w = \cap _{M \in Max_t(D)}ID_M$, cf. \cite[Corollary 2.10]{AC}.
Consequently, a nonzero fractional ideal is $w$-invertible if and only if it is $t$-invertible. Recall \cite{EFP} that an integral domain $D$ is said to be {\em $*$-Dedekind} if every nonzero fractional ideal of $D$ is $*$-invertible. A domain $D$ is called a Prufer {\em $*$-multiplication domain (P$*$MD)} if every nonzero finitely generated ideal of $D$ is $*_f$-invertible (see \cite{FJS}).
For the general theory of star operations we refer the reader to \cite[Sections 32 and 34]{G}.\\

We introduce the key concept of this paper.

\begin{definition}\label{1}
Let $*$ be a star operation on $D$. We say that a domain $D$ is a {\em  $*$-sharp domain} if whenever $I$, $A$, $B$  are nonzero ideals of $D$ with $I \supseteq AB$, there exist nonzero ideals $H$ and $J$ such that $I^{*}=(HJ)^{*}$, $H^{*}\supseteq A$ and $J^{*}\supseteq B$.
\end{definition}

The $d$-sharp domains are just the sharp domains studied in \cite{ADE}. If $*_1 \leq *_2$ are star operations and $D$ is $*_1$-sharp, then $D$ is $*_2$-sharp (Proposition \ref{81}). In particular, if $*$ is a star operation, then  every sharp domain is $*$-sharp and every $*$-sharp domain is $v$-sharp. A $t$-sharp domain is $v$-sharp but the converse is not true in general (Remark \ref{121}).

In Section 2, we study the $*$-sharp domains in general.
In this new context, we generalize most of the results obtained in \cite{ADE}.
For  $*\in \{d,b,w,t\}$, every fraction ring of a $*$-sharp domain is $*$-sharp (Proposition \ref{81}).
Every $*$-Dedekind domain is $*$-sharp. In particular, every Krull domain is  $t$-sharp (Proposition \ref{3}).
Let $D$ be a domain and  $*$  a finite character  stable star operation such that $D$ is $*$-sharp. Then $D$ is a P$*$MD of $*$-dimension $\leq 1$; moreover   $D_M$ is a  valuation domain with value group a complete subgroup of the reals, for each  $M\in Max_*(D)$ (Proposition \ref{77}).
The converse is true for domains  whose  nonzero elements are contained in only finitely many $*$-maximal ideals (Proposition \ref{200}).
If  $*$ is a star operation on $D$ such that $D$ is a $*$-sharp domain, then $I_v$ is $*$-invertible for each nonzero ideal $I$
(Proposition \ref{3a}).
If $*$ is a finite character stable star operation  on $D$ such that $D$ is a $*$-sharp TV domain, then $D$ is $*$-Dedekind (Corollary \ref{111}).
A domain $D$ is $v$-sharp if and only if $D$ is completely integrally closed (Corollary \ref{100}). In particular, every $*$-sharp domain is completely integrally closed.
If  $*$ is a stable star operation on $D$ such that $D$ is a $*$-sharp domain, then every finitely generated nonzero ideal of $D$ is $*$-invertible (Proposition \ref{102}).
If $D$ is a countable domain and $*$ a finite character  stable star operation on $D$  such that $D$ is $*$-sharp, then $D$ is a $*$-Dedekind domain (Corollary \ref{845}).

In Section 3, we study the $t$-sharp domains. We obtain the following results.
Every $t$-sharp domain $D$ is a PVMD with $t$-dimension $\leq 1$ and $D_M$ is a  valuation domain with value group a complete subgroup of the reals, for each maximal $t$-ideal $M$ of $D$ (Proposition  \ref{11}). A domain is $t$-sharp if and only if it is  $w$-sharp  (Proposition \ref{103}).
A domain  $D$ is a Krull domain if and only if $D$  is a $t$-sharp TV domain (Corollary \ref{400}).
If $D$ is a countable $t$-sharp domain, then $D$ is a Krull domain
(Corollary \ref{300}).
A domain $D$ is $t$-sharp if and only if $D[X]$ is $t$-sharp
(Proposition \ref{331}) if and only if $D[X]_{N_v}$ is sharp (Proposition \ref{133}). Here  $N_v$ denotes the multiplicative subset of $D[X]$ consisting of all  $f\in D[X]-\{0\}$ with $c(f)_v=D$, where $c(f)$ is the ideal generated by the coefficients of $f$.
Let $D$ be a $t$-sharp domain. Then $N'_v=\{ f\in D[[X]]-\{0\}\mid c(f)_v=D\}$ is  a multiplicative set,
$D[[X]]_{N'_v}$ is a sharp domain and every ideal of $D[[X]]_{N'_v}$ is extended from $D$ (Proposition \ref{1024}). Moreover, $D[[X]]_{N'_v}$ is a faithfully flat $D[X]_{N_v}$-module
and there is a one-to-one correspondence between the ideals of $D[X]_{N_v}$ and the  ideals of $D[[X]]_{N'_v}$
(Corollary \ref{307}).

Throughout this paper all rings are (commutative unitary) integral domains. Any unexplained material is standard, as in \cite{G}, \cite{H}.

\section{$*$-sharp domains.}

In this section we study the $*$-sharp domains  for an arbitrary star operation $*$ (see Definition \ref{1}).  We obtain $*$-operation analogues for most of the results  in \cite{ADE}.

\begin{proposition} \label{4}
Let $D$ be a domain, $S\subseteq D$ a multiplicative set and $*$ (resp. $\sharp$)  star operations  on $D$ (resp. $D_S$) such that
$I^*\subseteq (ID_S)^\sharp$  for each nonzero ideal $I$ of $D$.
If $D$ is  $*$-sharp, then the fraction ring $D_S$  is  $\sharp$-sharp.
\end{proposition}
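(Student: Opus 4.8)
The plan is to reduce the problem to ideals of $D$ by contraction along $D\hookrightarrow D_S$, apply the $*$-sharpness of $D$ there, and then extend the resulting ideals back to $D_S$, using the compatibility hypothesis $I^*\subseteq (ID_S)^\sharp$ to carry the star closures across the localization. The verification of Definition \ref{1} for $(D_S,\sharp)$ then falls out of the corresponding data for $(D,*)$.

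First I would isolate the auxiliary identity $(ID_S)^\sharp=(I^*D_S)^\sharp$ for every nonzero ideal $I$ of $D$. The inclusion $\subseteq$ is immediate from $I\subseteq I^*$. For $\supseteq$, the hypothesis gives $I^*\subseteq (ID_S)^\sharp$; since $(ID_S)^\sharp$ is a $D_S$-module this upgrades to $I^*D_S\subseteq (ID_S)^\sharp$, and applying $\sharp$ yields $(I^*D_S)^\sharp\subseteq (ID_S)^\sharp$. This identity is exactly what lets me replace the $*$-closure computed in $D$ by a plain extended ideal after passing to $D_S$.

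Now take nonzero ideals $\mathcal I,\mathcal A,\mathcal B$ of $D_S$ with $\mathcal I\supseteq\mathcal A\mathcal B$, and set $I=\mathcal I\cap D$, $A=\mathcal A\cap D$, $B=\mathcal B\cap D$. These are nonzero ideals of $D$ satisfying $ID_S=\mathcal I$, $AD_S=\mathcal A$, $BD_S=\mathcal B$, since in a localization every ideal is the extension of its contraction. The step I expect to require the most care is producing a genuine containment $AB\subseteq I$ \emph{inside} $D$: the hypothesis only supplies $\mathcal A\mathcal B\subseteq\mathcal I$ in $D_S$, and contraction does not commute with products in general. This is precisely why I contract each ideal separately: because $A,B\subseteq D$ we have $AB\subseteq D$, while $A\subseteq\mathcal A$ and $B\subseteq\mathcal B$ give $AB\subseteq\mathcal A\mathcal B\subseteq\mathcal I$, whence $AB\subseteq\mathcal I\cap D=I$. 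The $*$-sharpness of $D$ now applies to $I\supseteq AB$ and yields nonzero ideals $H,J$ of $D$ with $I^*=(HJ)^*$, $H^*\supseteq A$ and $J^*\supseteq B$.

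Finally I would put $\mathcal H=HD_S$ and $\mathcal J=JD_S$ (nonzero since $H,J$ are) and check the three conditions. For the inclusions, $A\subseteq H^*\subseteq (HD_S)^\sharp=\mathcal H^\sharp$ by the hypothesis, and as $\mathcal H^\sharp$ is a $D_S$-module this forces $\mathcal A=AD_S\subseteq\mathcal H^\sharp$; symmetrically $\mathcal B\subseteq\mathcal J^\sharp$. For the $\sharp$-equality I would chain the auxiliary identity with $I^*=(HJ)^*$:
\[
\mathcal I^\sharp=(ID_S)^\sharp=(I^*D_S)^\sharp=((HJ)^*D_S)^\sharp=((HJ)D_S)^\sharp=(\mathcal H\mathcal J)^\sharp,
\]
the last equality being $(HJ)D_S=\mathcal H\mathcal J$. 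This confirms Definition \ref{1} for $D_S$ relative to $\sharp$, so $D_S$ is $\sharp$-sharp, completing the argument.
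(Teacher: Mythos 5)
Your proof is correct and takes essentially the same route as the paper's: both establish the auxiliary identity $(I^*D_S)^\sharp=(ID_S)^\sharp$, contract to an ideal of $D$ containing the product, apply $*$-sharpness in $D$, and extend $H,J$ back to $D_S$. The only cosmetic difference is that you contract $\mathcal A$ and $\mathcal B$ as well, whereas the paper writes the ideals of $D_S$ as extensions $AD_S$, $BD_S$ from the outset and only contracts the ideal playing the role of $\mathcal I$; nothing of substance changes.
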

\begin{proof}
Note that the condition $I^*\subseteq (ID_S)^\sharp$ in the hypothesis is equivalent to $(I^*D_S)^\sharp=(ID_S)^\sharp$.
Let $I,A,B$ be nonzero ideals  of $D$ such that $ID_S\supseteq ABD_S$. Then $C=ID_S \cap D \supseteq  AB$. As $D$ is $*$-sharp, we have $C^*=(HJ)^*$ with $H,J$ ideals of $D$ such that  $H^*\supseteq A$ and $J^*\supseteq B$.  Since $(WD_S)^\sharp=(W^*D_S)^\sharp$ for every nonzero ideal $W$,
we get $(ID_S)^\sharp=(C^*D_S)^\sharp=((HJ)^*D_S)^\sharp=(HJD_S)^\sharp$,  $(HD_S)^\sharp=(H^*D_S)^\sharp\supseteq AD_S$ and  $(JD_S)^\sharp\supseteq BD_S$.
\end{proof}


\begin{proposition} \label{81}
Let $D$ be a domain, $*_1 \leq  *_2$   star operations and $D$ and $S\subseteq D$ a multiplicative set.

$(a)$  If $D$ is $*_1$-sharp, then D is $*_2$-sharp.

$(b)$ If $*\in \{d,t,w,b\}$ and $D$ is  $*$-sharp (with $D$ integrally closed if $*=b$), then  $D_S$  is  $*$-sharp.
\end{proposition}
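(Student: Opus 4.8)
The plan is to treat the two parts separately: part $(a)$ is a direct consequence of the $*$-multiplication identities recorded in the preliminaries, while part $(b)$ reduces to Proposition \ref{4}.

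For $(a)$, I would start from the $*_1$-sharp hypothesis. Given nonzero ideals $I \supseteq AB$, choose $H, J$ with $I^{*_1} = (HJ)^{*_1}$, $H^{*_1} \supseteq A$ and $J^{*_1} \supseteq B$. Since $*_1 \leq *_2$, the two containments upgrade at once: $H^{*_2} \supseteq H^{*_1} \supseteq A$ and likewise $J^{*_2} \supseteq B$. For the equality I would apply $*_2$ to both sides of $I^{*_1} = (HJ)^{*_1}$ and invoke the identity $(I^{*_1})^{*_2} = I^{*_2}$ from the preliminaries, obtaining $I^{*_2} = (HJ)^{*_2}$. Thus $D$ is $*_2$-sharp, with no real obstacle.

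For $(b)$, the idea is to apply Proposition \ref{4} with $\sharp$ chosen to be the same type of star operation ($d$, $t$, $w$ or $b$) on $D_S$. The whole task then collapses to verifying the single hypothesis $I^* \subseteq (ID_S)^\sharp$ for every nonzero ideal $I$ of $D$, which I would check case by case. The $d$ case is immediate since $I \subseteq ID_S$. For $t$ and $w$ the key tool is that forming inverses commutes with localization at finitely generated ideals, i.e. $(D:F)D_S = (D_S : FD_S)$ whenever $F$ is finitely generated (this follows from writing $F^{-1}$ as a finite intersection $\cap_i a_i^{-1}D$ and using that localization commutes with finite intersections). In the $t$ case, an element $x \in I_t$ lies in $F_v$ for some finitely generated $F \subseteq I$; then $xF^{-1} \subseteq D$ gives $xF^{-1}D_S \subseteq D_S$, so $x \in (D_S : F^{-1}D_S) = (FD_S)_v \subseteq (ID_S)_t$. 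The $w$ case is analogous: if $x \in I_w$ is witnessed by a finitely generated $H$ with $H^{-1} = D$ and $xH \subseteq I$, then $(HD_S)^{-1} = H^{-1}D_S = D_S$ and $x(HD_S) \subseteq ID_S$, whence $x \in (ID_S)_w$.

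The $b$ case I would handle by identifying the valuation overrings of $D_S$ with exactly those valuation overrings $V$ of $D$ satisfying $V \supseteq D_S$; for such $V$ one has $D_S V = V$, so $(ID_S)_b = \cap_{V \supseteq D_S} IV$, the intersection being taken over this subfamily. Since $I_b = \cap_V IV$ runs over the larger family of all valuation overrings of $D$, intersecting over fewer rings yields a larger ideal, giving $I_b \subseteq (ID_S)_b$. Having verified $I^* \subseteq (ID_S)^\sharp$ in all four cases, Proposition \ref{4} delivers that $D_S$ is $*$-sharp. The one point requiring genuine care is the localization-of-inverses identity underpinning the $t$ and $w$ cases; the $d$ and $b$ cases are essentially formal.
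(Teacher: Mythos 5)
Your proposal is correct and follows essentially the same route as the paper: part $(a)$ is the immediate upgrade along $*_1\leq *_2$ (the paper phrases it as Proposition~\ref{4} applied with $S=\{1\}$), and part $(b)$ reduces to Proposition~\ref{4} by verifying $I^*\subseteq (ID_S)^*$ case by case for $d,t,w,b$, exactly as the paper does. The only difference is cosmetic: where the paper cites Kang's Lemma~3.4 for the $t$ and $w$ cases, you prove the underlying localization identity $(D:F)D_S=(D_S:FD_S)$ for finitely generated $F$ directly, which is a valid self-contained substitute.
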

\begin{proof}
$(a)$. Apply Proposition \ref{4} for $S=\{1\}$, $*=*_1$ and $\sharp=*_2$.
$(b)$. By Proposition \ref{4}, it suffices to show that $I^*\subseteq (ID_S)^*$  for each nonzero ideal $I$ of $D$. This is clear for $*=d$ and true for $*=t$, cf. \cite[Lemma 3.4]{Kg}. Assume that $x\in I_w$. Then $xH\subseteq I$ for some finitely generated nonzero ideal $H$ of $D$ such that $H_v=D$. Hence $(HD_S)_v=D_S$ (cf. \cite[Lemma 3.4]{Kg}) and $xHD_S\subseteq ID_S$, thus $x\in (ID_S)_w$. Assume that $D$ integrally closed. If $V$ is a valuation overring of $D_S$, then  $V$ is an overring of $D$, so $I_b\subseteq IV$. Thus $I_b\subseteq (ID_S)_b$.
\end{proof}

In \cite[Theorem 11]{ADE}, it was shown that a sharp domain is a Prufer domain of dimension at most $1$. We extend this result.

\begin{proposition} \label{77}
Let $D$ be a domain and $*$ a finite character  stable star operation on $D$  such that $D$ is $*$-sharp.  Then    $D_M$ is a  valuation domain with value group a complete subgroup of the reals, for each  $M\in Max_*(D)$. In particular, $D$ is a P$*$MD of $*$-dimension $\leq 1$.
\end{proposition}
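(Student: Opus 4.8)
The plan is to localize at a fixed maximal $*$-ideal and reduce to the ordinary sharp-domain case of \cite{ADE}. First I would fix $M\in Max_*(D)$, set $S=D\setminus M$, and show that $D_M=D_S$ is a sharp domain (that is, $d$-sharp). To invoke Proposition \ref{4} with $\sharp$ the $d$-operation on $D_M$, I must verify the hypothesis $I^*\subseteq (ID_S)^d=ID_M$ for every nonzero ideal $I$ of $D$. This is exactly where the standing assumptions enter: since $*$ is a finite character stable star operation, $*=\tilde{*}$ and $I^*=\cap_{P\in Max_*(D)}ID_P$; as $M$ is one of these maximal $*$-ideals, the factor $ID_M$ occurs in the intersection, giving $I^*\subseteq ID_M$ at once. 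Proposition \ref{4} then yields that $D_M$ is $d$-sharp, i.e. a sharp domain in the sense of \cite{ADE}.

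Next I would feed this local sharp domain into the structure theory of \cite{ADE}. Since $D_M$ is quasi-local with maximal ideal $MD_M$, its localization at that maximal ideal is $D_M$ itself. By \cite[Theorem 11]{ADE}, the localization of a sharp domain at a maximal ideal is a valuation domain whose value group is a complete subgroup of the reals; applied to $D_M$ this gives precisely the desired conclusion that $D_M$ is a valuation domain with value group a complete subgroup of $\mathbb{R}$.

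It then remains to derive the two ``in particular'' claims. For the P$*$MD assertion, let $I$ be a nonzero finitely generated ideal. For each $M\in Max_*(D)$ the ideal $ID_M$ is finitely generated over the valuation domain $D_M$, hence principal and invertible, so $(II^{-1})D_M=(ID_M)(ID_M)^{-1}=D_M$, using that $(I^{-1})D_M=(ID_M)^{-1}$ for finitely generated $I$. Thus $II^{-1}$ lies in no maximal $*$-ideal, and since $*=*_f$ is of finite character this means $I$ is $*$-invertible, so $D$ is a P$*$MD. For the dimension bound, a valuation domain whose value group embeds in $\mathbb{R}$ has Krull dimension $\leq 1$, so each $\dim D_M\leq 1$. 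If there were a chain $0\subsetneq P_1\subsetneq P_2$ of prime $*$-ideals, then $P_2$ would be contained in some $M\in Max_*(D)$, and localizing would produce a chain $0\subsetneq P_1D_M\subsetneq P_2D_M$ in $D_M$ (the inclusions staying strict because $P_i=P_iD_M\cap D$), contradicting $\dim D_M\leq 1$; hence the $*$-dimension is $\leq 1$.

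The crux of the argument is the first reduction: recognizing that finite-character stability forces $I^*\subseteq ID_M$ and thereby transfers $*$-sharpness of $D$ to genuine sharpness of the localization $D_M$. Once $D_M$ is known to be sharp, everything else is an application of the results recalled from \cite{ADE} together with routine valuation-theoretic facts, so I expect the only delicate point to be confirming the hypothesis of Proposition \ref{4} and keeping track of the interplay between $*$-invertibility and localization for finitely generated ideals.
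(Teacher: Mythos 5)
Your proof is correct and takes essentially the same route as the paper's: verify the hypothesis of Proposition \ref{4} (with $\sharp=d$) using that stability plus finite character gives $*=\tilde{*}$, hence $I^*\subseteq ID_M$, conclude that $D_M$ is sharp, and then apply \cite[Theorem 11]{ADE} to the quasi-local domain $D_M$. The only difference is that you write out the P$*$MD and $*$-dimension $\leq 1$ deductions in full, which the paper dismisses as clear; your versions of those routine arguments are accurate.
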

\begin{proof}
Let $M$ be a maximal $*$-ideal. If  $I$ is a nonzero ideal  of $D$, then $I^*D_M=ID_M$, cf. \cite[Corollary 4.2]{A}. By   Proposition \ref{4}, applied for $S=D-M$, $*=*$ and $\sharp=d$, we get that $D_M$ is a sharp domain. Apply \cite[Theorem 11]{ADE}. The ``in particular'' assertion is clear.
\end{proof}

\begin{proposition}\label{3}
Let $D$ be a domain and $*$ a star operation on $D$. If $D$ is $*$-Dedekind, then $D$ is $*$-sharp. In particular, every Krull domain is   $t$-sharp.
\end{proposition}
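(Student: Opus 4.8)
The plan is to leverage the defining property of a $*$-Dedekind domain, namely that every nonzero fractional ideal is $*$-invertible, so that I may divide by ideals up to $*$-closure. Given nonzero ideals $I \supseteq AB$, the obvious attempt $H = A$, $J = (A^{-1}I)^{*}$ produces a factor $J$ that need not be integral. To obtain two genuine integral ideals I would instead set
\[ H = I + A, \qquad J = (I+A)^{-1}I. \]
Folding $I$ into $H$ is the crucial device: because then $I \subseteq H$, the factor $J = H^{-1}I$ satisfies $J \subseteq H^{-1}H \subseteq D$ and so is automatically an integral ideal, while $A \subseteq I + A = H$ gives $H^{*} \supseteq A$ for free.

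Next I would dispatch the product identity. Since $H$ is $*$-invertible, $(HH^{-1})^{*}=D$, and the $*$-multiplication rule yields
\[ (HJ)^{*} = (HH^{-1}I)^{*} = ((HH^{-1})^{*}I)^{*} = (DI)^{*} = I^{*}. \]
The main obstacle is the remaining requirement $J^{*} \supseteq B$, and here I would again multiply through by the $*$-invertible ideal $H$: this operation is reversible, so $J^{*} \supseteq B$ is equivalent to $I^{*} = (HJ)^{*} \supseteq (HB)^{*} = ((I+A)B)^{*} = (IB+AB)^{*}$. This inclusion then follows from the hypotheses alone, since $IB \subseteq I$ (as $B$ is integral) and $AB \subseteq I$ by assumption, whence $IB+AB \subseteq I$ and $(IB+AB)^{*} \subseteq I^{*}$. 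I expect the care to lie precisely in justifying the reversibility of the multiplication-by-$H$ step, which rests on $H$ being $*$-invertible.

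For the final assertion I would recall the classical characterization that a Krull domain is exactly a $t$-Dedekind domain, i.e.\ one in which every nonzero fractional ideal is $t$-invertible. The $t$-sharpness of a Krull domain is then the case $*=t$ of the statement just proved.
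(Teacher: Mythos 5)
Your proposal is correct and follows essentially the same route as the paper: the identical decomposition $H=I+A$, $J=H^{-1}I$, the same use of $(HH^{-1})^{*}=D$ to get $I^{*}=(HJ)^{*}$, and the same key observation $HB\subseteq I$ to obtain $B\subseteq J^{*}$ (the paper writes this as the direct chain $B\subseteq (BHH^{-1})^{*}\subseteq (IH^{-1})^{*}=J^{*}$ rather than your ``divide by the $*$-invertible ideal $H$'' phrasing, but it is the same computation). The final reduction of the Krull case to $t$-Dedekind via Kang's theorem also matches the paper.
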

\begin{proof}
Let $I,A,B$ be nonzero ideals of $D$ such that $I \supseteq AB$. Set $H=I+A$ and $J=IH^{-1}$. Note that $J\subseteq D$ and $A\subseteq H$. Since $(HH^{-1})^*=D$, we get $I^*=(HJ)^*$. From $BH=B(A+I) \subseteq I$, we get  $B \subseteq (BHH^{-1})^* \subseteq (IH^{-1})^*=J^*$. For the ``in particular statement'', recall that  the $t$-Dedekind domains are the Krull domains, cf. \cite[Theorem 3.6]{Kg1}.
\end{proof}

\begin{proposition}\label{3a}
Let $D$ be a domain and $*$ a star operation on $D$ such that $D$ is $*$-sharp. Then $I_v$ is $*$-invertible for each nonzero ideal $I$.
\end{proposition}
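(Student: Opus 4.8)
The plan is to feed a carefully chosen containment into the $*$-sharp hypothesis and then peel off an invertible factor. First I would fix a nonzero element $c \in I$, so that $cI^{-1}$ is an \emph{integral} ideal of $D$ (indeed $cI^{-1} \subseteq II^{-1} \subseteq D$). The key observation is that $I^{-1}I \subseteq D$ yields the containment $cD \supseteq (cI^{-1})I$. Applying $*$-sharpness to this containment produces nonzero ideals $H,J$ with $(cD)^* = cD = (HJ)^*$, $H^* \supseteq cI^{-1}$ and $J^* \supseteq I$. The goal will be to show that $J^* = I_v$ and that $J^*$ is $*$-invertible.

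Next I would observe that both $H$ and $J$ are $*$-invertible, precisely because their $*$-product is the principal (hence $*$-invertible) ideal $cD$. Concretely, $HJ \subseteq cD$ gives $c^{-1}J \subseteq H^{-1}$, whence $(HH^{-1})^* \supseteq (Hc^{-1}J)^* = c^{-1}(HJ)^* = D$; by symmetry $J$ is $*$-invertible as well. Using the $*$-invertibility of $H$ I can divide it out: since $(HH^{-1})^* = D$, the $*$-multiplication rules give $J^* = (H^{-1}HJ)^* = (H^{-1}(HJ))^* = (H^{-1}cD)^* = cH^{-1}$, and symmetrically $H^* = cJ^{-1}$. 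In particular $J^* = cH^{-1}$ is a nonzero multiple of an inverse, hence divisorial, so $J^* = J_v$; and $J^*$ is $*$-invertible because $J$ is.

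It then remains to sandwich $J_v$ between $I_v$ and itself. From $J^* \supseteq I$ together with the fact that $J^* = J_v$ is divisorial, I get $J^* \supseteq I_v$. For the reverse inclusion I use the other factor: $cJ^{-1} = H^* \supseteq cI^{-1}$ forces $J^{-1} \supseteq I^{-1}$, and taking inverses reverses this to $J_v = (J^{-1})^{-1} \subseteq (I^{-1})^{-1} = I_v$. Combining the two inclusions gives $I_v = J^*$, which we already know to be $*$-invertible, and the proof is complete.

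I expect the main obstacle to be the opening move --- recognizing that the right object to feed into the $*$-sharp condition is the containment $cD \supseteq (cI^{-1})I$ rather than something assembled directly from $I$ and $I^{-1}$ --- together with the extraction of $*$-invertibility of the factors $H,J$ from the single fact that $(HJ)^*$ is principal. Once those are in hand, the identification $J^* = cH^{-1}$ and the two-sided sandwich are routine manipulations with the $*$-multiplication rules and the elementary facts $(J^*)^{-1} = J^{-1}$ and $(I^{-1})_v = I^{-1}$ recorded in the introduction.
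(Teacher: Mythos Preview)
Your proof is correct and follows essentially the same approach as the paper: feed the containment $cD \supseteq (cI^{-1})I$ into the $*$-sharp hypothesis, use principality of $(HJ)^*$ to extract $*$-invertibility of the factors, and then sandwich $I_v$ between the two sides. The only cosmetic difference is that the paper swaps the roles of $H$ and $J$ (it takes $H^* \supseteq I$ and shows $I_v = H_v$), and it is slightly terser in deducing invertibility and the identity $H^{-1} = (x^{-1}J)^*$.
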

\begin{proof}
Let $I$ be a nonzero ideal of $D$ and $x  \in I-\{0\}$. Then $I(xI^{-1}) \subseteq xD$. Since $D$ is $*$-sharp, there exist $H,J$ ideals of $D$ such that  $H^* \supseteq I$,  $J^* \supseteq xI^{-1}$ and  $xD=(HJ)^*$. Hence $H$ is $*$-invertible and we get
$H^{-1}=(x^{-1}J)^*\supseteq (xx^{-1}I^{-1})^*=I^{-1}$, so $H_v\subseteq I_v$. The opposite inclusion follows from  $H^* \supseteq I$. Thus $I_v=H_v$ is $*$-invertible, because $H^*=H_v$ since $H$ is $*$-invertible.
\end{proof}

Next, we extend  \cite[Corollary 12]{ADE} to the star operation setting.

\begin{corollary}\label{111}
Let $D$ be a domain and $*$ a finite character  stable star operation on $D$  such that $D$ is $*$-sharp. If $D$ is a TV domain (e.g. a Mori domain), then $D$ is $*$-Dedekind.
\end{corollary}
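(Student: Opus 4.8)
The plan is to prove the stronger pointwise statement $I^{*}=I_{v}$ for every nonzero fractional ideal $I$ of $D$; granting this, $I^{*}$ is $*$-invertible by Proposition \ref{3a}, hence so is $I$, and therefore $D$ is $*$-Dedekind. (It is enough to treat integral ideals, since $*$-invertibility is unaffected by clearing a denominator.) The starting point is Proposition \ref{77}: under the present hypotheses $D$ is a P$*$MD, so every nonzero finitely generated ideal $F$ of $D$ is $*_{f}$-invertible; as $*$ is of finite character we have $*_{f}=*$, so each such $F$ is $*$-invertible.

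The heart of the argument is the identity $I^{*}=I_{t}$, valid for every nonzero ideal $I$ in any P$*$MD with $*$ of finite character. I would prove it by expanding both sides over the family of nonzero finitely generated subideals $F\subseteq I$. Since $*=*_{f}$, one has $I^{*}=\bigcup_{F}F^{*}$, and by the definition $t=v_{f}$ one has $I_{t}=\bigcup_{F}F_{v}$, both unions ranging over the same $F$. Thus it suffices to show $F^{*}=F_{v}$ for each finitely generated $F$. This is the step that uses the P$*$MD property, through the following standard fact, which I would verify inline: a $*$-invertible ideal $F$ is divisorial, i.e. $F^{*}=F_{v}$. Indeed, $F^{*}\subseteq F_{v}$ always; conversely, if $x\in F_{v}=(F^{-1})^{-1}$ then $xF^{-1}\subseteq D$, so $xF^{-1}F\subseteq F$ and hence $x\in xD=x(FF^{-1})^{*}=(xF^{-1}F)^{*}\subseteq F^{*}$, giving $F_{v}\subseteq F^{*}$.

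With $I^{*}=I_{t}$ in hand, the TV hypothesis finishes the proof: every $t$-ideal is a $v$-ideal, so $I_{t}=I_{v}$, whence $I^{*}=I_{v}$. Combining with Proposition \ref{3a} yields that $I^{*}=I_{v}$ is $*$-invertible, as required. I expect the only delicate point to be the identity $I^{*}=I_{t}$ --- more precisely, the divisorial fact $F^{*}=F_{v}$ for $*$-invertible $F$ --- together with the harmless reduction to integral ideals; once these are settled, the TV hypothesis and Proposition \ref{3a} plug in directly. Note that the equality $I^{*}=I_{t}$ in fact forces $*=t$ here, which is consistent with the P$*$MD and $*$-dimension $\le 1$ structure supplied by Proposition \ref{77}.
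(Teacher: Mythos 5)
Your proposal is correct and follows essentially the same route as the paper's proof: Proposition \ref{77} gives the P$*$MD property, from which one gets $*=t$; the TV hypothesis then yields $*=t=v$; and Proposition \ref{3a} finishes, since $I^*=I_v$ being $*$-invertible makes $I$ itself $*$-invertible. The only difference is that the paper cites \cite[Proposition 3.15]{FJS} for the identity $*=t$, whereas you verify it inline via the union over finitely generated subideals together with the standard fact that a $*$-invertible ideal $F$ satisfies $F^*=F_v$ --- a fact the paper itself invokes inside the proof of Proposition \ref{3a} --- so your argument is sound and merely self-contained where the paper is not.
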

\begin{proof}
By Proposition \ref{77}, $D$ is a P$*$MD , so $*=t$, cf. \cite[Proposition 3.15]{FJS}. As $D$ is a TV domain, we get $*=t=v$. By Proposition \ref{3a}, $D$ is $*$-Dedekind.
\end{proof}

\begin{corollary}\label{100}
A domain $D$ is $v$-sharp if and only if $D$ is completely integrally closed. In particular, any  $*$-sharp domain  is  completely integrally closed.
\end{corollary}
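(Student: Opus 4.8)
The plan is to reduce both implications to the classical characterization that $D$ is completely integrally closed if and only if $(II^{-1})_v=D$ for every nonzero ideal $I$ of $D$, equivalently, every nonzero fractional ideal of $D$ is $v$-invertible (a well-known fact, cf. \cite{G}). Once this is in hand, both directions follow by specializing the two preceding propositions to $*=v$.

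For the forward direction I would assume $D$ is $v$-sharp and invoke Proposition \ref{3a} with $*=v$, which yields that $I_v$ is $v$-invertible for every nonzero ideal $I$. Since $(I_v)^{-1}=I^{-1}$ and, by $*$-multiplication, $(I_v I^{-1})_v=(II^{-1})_v$, the $v$-invertibility of $I_v$ is precisely the statement $(II^{-1})_v=D$. Letting $I$ range over all nonzero integral ideals, and noting that any nonzero fractional ideal has the form $d^{-1}I$ with the principal factor leaving $v$-invertibility unaffected, I would conclude that $D$ is completely integrally closed.

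For the converse I would assume $D$ completely integrally closed, so that every nonzero fractional ideal is $v$-invertible; that is, $D$ is $v$-Dedekind. Proposition \ref{3} applied with $*=v$ then gives at once that $D$ is $v$-sharp. Finally, for the ``in particular'' clause I would use that $v$ is the largest star operation, so $*\leq v$ for every star operation $*$ on $D$; hence every $*$-sharp domain is $v$-sharp by Proposition \ref{81}$(a)$, and therefore completely integrally closed by the equivalence just established.

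The only point requiring care is the bookkeeping in the forward direction, namely translating ``$I_v$ is $v$-invertible'' into ``$(II^{-1})_v=D$'' and passing between integral and fractional ideals. This is routine once the correct characterization of complete integral closure is recalled, and I anticipate no substantive obstacle: the argument is essentially the assembly of Propositions \ref{3a} and \ref{3} with that standard fact.
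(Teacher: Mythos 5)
Your proof is correct and is essentially the paper's own argument: the paper likewise establishes the equivalence by combining Proposition \ref{3a} (for $*=v$, giving $v$-sharp $\Rightarrow$ $v$-Dedekind) with Proposition \ref{3} (giving $v$-Dedekind $\Rightarrow$ $v$-sharp), cites the classical fact that $v$-Dedekind is the same as completely integrally closed (\cite[Theorem 34.3]{G}), and handles the ``in particular'' clause via $*\leq v$ and Proposition \ref{81}. The bookkeeping you spell out --- $(I_v)^{-1}=I^{-1}$, $(I_vI^{-1})_v=(II^{-1})_v$, and reduction of fractional to integral ideals --- is exactly the content of that cited classical equivalence, so no substantive difference remains.
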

\begin{proof}
By Propositions \ref{3} and \ref{3a} (for $*=v$),  $D$ is $v$-sharp if and only if $D$ is $v$-Dedekind. By \cite[Theorem 34.3]{G} or \cite[Proposition 3.4]{F}, a domain is $v$-Dedekind if and only if it is  completely integrally closed. For the ``in particular'' assertion, apply  Proposition \ref{81} taking into account that $*\leq v$ for each star operation $*$.
\end{proof}


\begin{remark}\label{121}
$(a)$ There exist  a completely integrally closed domain $A$  having some fraction ring which is not completely integrally closed (for instance the ring of entire functions, cf. \cite[Exercises 16 and 21, page 147]{G}). Thus the $v$-sharp property does not localize, cf. Corollary \ref{100}. Note that $A$ cannot be $t$-sharp because the $t$-sharp property localizes, cf.
Proposition \ref{81}.
$(b)$ Let $D$ be a completely integrally closed domain which is not a PVMD (such a domain is constructed in  \cite{D}). By  Corollary \ref{100} and Proposition  \ref{11}, such a domain is $v$-sharp but not $t$-sharp.
$(c)$ Let $D$ be a Krul domain of dimension $\geq 2$ (e.g. $\mathbb{Z}[X]$).  By  Proposition \ref{3} and \cite[Theorem 11]{ADE}, $D$ is  $t$-sharp but not sharp.

\end{remark}

In the next lemma we recall two well-known facts.

\begin{lemma} \label{71}
Let $D$ be a domain, $*$ a  star operation on $D$ and $I,J,H\in F(D)$.

$(a)$ If $(I+J)^*=D$, then $(I\cap J)^*=(IJ)^*$.

$(b)$ If $I$ is $*$-invertible, then $(I(J\cap H))^*=(IJ\cap IH)^*$.
\end{lemma}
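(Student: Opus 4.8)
The plan is to prove the two standard identities in Lemma \ref{71} directly from the axioms of a star operation and the properties of $*$-multiplication recorded in the introduction, namely $(IJ)^*=(I^*J)^*=(I^*J^*)^*$ and the monotonicity $I\subseteq J\Rightarrow I^*\subseteq J^*$. Both parts follow the same template: bound the two sides of the desired equality between each other by exhibiting containments, then apply $*$ and simplify.

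For part $(a)$, I would first note the trivial inclusion $IJ\subseteq I\cap J$, which gives $(IJ)^*\subseteq(I\cap J)^*$ by monotonicity, so only the reverse inclusion needs work. The key is to multiply the intersection by $(I+J)^*=D$: since $I\cap J=((I\cap J)D)^*$ in the sense that $(I\cap J)^*=((I\cap J)(I+J)^*)^*=((I\cap J)(I+J))^*$, I would expand $(I\cap J)(I+J)=(I\cap J)I+(I\cap J)J\subseteq JI+IJ=IJ$, using $(I\cap J)I\subseteq JI$ and $(I\cap J)J\subseteq IJ$. Applying $*$ then yields $(I\cap J)^*\subseteq(IJ)^*$, and combining the two containments gives equality. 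The only subtle point is the first step, where I replace $(I+J)^*$ by $D$ inside the $*$-product; this is legitimate because $((I\cap J)(I+J)^*)^*=((I\cap J)(I+J))^*$ is exactly an instance of $(XY^*)^*=(XY)^*$.

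For part $(b)$, I would again establish two inclusions. The inclusion $(IJ\cap IH)^*\subseteq(I(J\cap H))^*$ is the delicate direction and is where $*$-invertibility of $I$ is essential. Writing $L=I^{-1}$ with $(IL)^*=D$, I would multiply $IJ\cap IH$ by $L$: from $L(IJ\cap IH)\subseteq LIJ\cap LIH$ and $(LI)^*=D$, one gets $(L(IJ\cap IH))^*\subseteq((LI)J\cap(LI)H)^*=\dots$, and after a short manipulation $L(IJ\cap IH)\subseteq J\cap H$ up to $*$-closure, so multiplying back by $I$ and using $(IL)^*=D$ recovers $(IJ\cap IH)^*\subseteq(I(J\cap H))^*$. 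The reverse inclusion $(I(J\cap H))^*\subseteq(IJ\cap IH)^*$ is easy, since $I(J\cap H)\subseteq IJ$ and $I(J\cap H)\subseteq IH$ force $I(J\cap H)\subseteq IJ\cap IH$, hence the inclusion after applying $*$.

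I expect the main obstacle to be the bookkeeping in part $(b)$: one must pass $L$ through the intersection, apply $*$ at the right moment, and then cancel $I$ against $L$ using $(IL)^*=D$ without accidentally using that $*$ distributes over intersections (it generally does not). The safe route is to keep everything inside a single $*$-closure and invoke only $(XY^*)^*=(XY)^*$ and monotonicity, so that each replacement of a $*$-invertible product by $D$ is clearly an identity of $*$-products rather than of ordinary ideals.
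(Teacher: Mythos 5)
Your proposal is correct and takes essentially the same route as the paper's proof: in $(a)$ both arguments multiply $I\cap J$ by $I+J$ and absorb the product into $IJ$, and in $(b)$ both insert $II^{-1}$ (valid up to $*$ since $(II^{-1})^*=D$), push $I^{-1}$ into the intersection, and reduce via $I^{-1}I\subseteq D$. The only point to tighten is the step written as $((LI)J\cap(LI)H)^*=\dots$ in $(b)$: you cannot replace $LI$ by $D$ inside the intersection (that would be the distributivity you rightly warn against), but you do not need to, since the plain containment $LIJ\cap LIH\subseteq J\cap H$ already follows from $LI\subseteq D$, after which cancelling with $(IL)^*=D$ finishes the argument exactly as in the paper.
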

\begin{proof}
$(a)$ Clearly,  $(IJ)^*\subseteq (I\cap J)^*$. Conversely, since  $(I+J)^*=D$, we have $(I\cap J)^*=((I\cap J)(I+J))^*\subseteq (IJ)^*$, thus $(I\cap J)^*=(IJ)^*$.
$(b)$ Clearly, $(I(J\cap H))^*\subseteq (IJ\cap IH)^*$. Conversely, because $I$ is $*$-invertible, we have $(IJ\cap IH)^*=(II^{-1}(IJ\cap IH))^*\subseteq (I(I^{-1}IJ\cap I^{-1}IH))^*\subseteq (I(J\cap H))^*$.
\end{proof}

The next result generalizes \cite[Proposition 10]{ADE}.

\begin{proposition} \label{211p}
Let $D$ be a domain and $*$ a stable star operation on $D$  such that $D$ is $*$-sharp.  If $I,J$ are nonzero ideals of $D$ such that $(I+J)_v=D$, then $(I_v+J_v)^*=D$.
\end{proposition}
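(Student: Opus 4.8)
The plan is to pass to the $*$-invertible ideals $A=I_v$ and $B=J_v$ and to prove that $A+B$ is $*$-invertible; the conclusion is then immediate. Indeed, by Proposition \ref{3a} both $A$ and $B$ are $*$-invertible, and from $I+J\subseteq A+B\subseteq (I+J)_v=D$ one reads off $(A+B)_v=D$, i.e. $(A+B)^{-1}=A^{-1}\cap B^{-1}=D$. Since the inverse of $A+B$ is $D$, once we know that $A+B$ is $*$-invertible the defining equality $((A+B)(A+B)^{-1})^*=D$ becomes exactly $(A+B)^*=D$, which is what we want. So the whole problem is to upgrade the $v$-coprimality $(A+B)_v=D$ to the stronger $*$-statement that $A+B$ is $*$-invertible.

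First I would record the ``product equals intersection'' identity $(AB)^*=A\cap B$. Because $(A+B)_v=D$ we have $(A\cap B)_v=((A\cap B)(A+B))_v\subseteq (AB)_v$, and together with $AB\subseteq A\cap B$ this gives $(A\cap B)_v=(AB)_v$. Now $AB$ is $*$-invertible (a product of $*$-invertible ideals), so $(AB)^*=(AB)_v$, whereas stability yields $(A\cap B)^*=A^*\cap B^*=A\cap B$. Chaining these, $A\cap B=(A\cap B)^*\subseteq (A\cap B)_v=(AB)_v=(AB)^*\subseteq (A\cap B)^*$, so $(AB)^*=A\cap B$; in particular $A\cap B$ is $*$-invertible, with $(A\cap B)^{-1}=(AB)^{-1}=A^{-1}B^{-1}$.

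The remaining, and hardest, step is to deduce that $A+B$ is $*$-invertible. The natural handle is the inclusion $(A+B)(A\cap B)\subseteq AB$: since $A\cap B$ and $AB$ are $*$-invertible, an equality $((A+B)(A\cap B))^*=(AB)^*$ would let me cancel and conclude that $A+B$ is $*$-invertible. Using Lemma \ref{71}(b) (applicable because $A$ and $B$ are $*$-invertible) this reduces to the single inclusion $(AB)^*\subseteq ((A^2\cap AB)+(AB\cap B^2))^*$, the reverse inclusion being automatic. Here is where the $*$-sharp hypothesis must enter substantively: I would feed containments of the shape ``$*$-invertible ideal $\supseteq$ product'' to Definition \ref{1}, in the spirit of the proof of Proposition \ref{3a}, to manufacture exactly this missing inclusion.

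I expect this last inclusion to be the main obstacle, for a structural reason. Purely $*$-multiplicative manipulation (stability together with Lemma \ref{71}) cannot suffice, since the identity $((A+B)(A\cap B))^*=(AB)^*$ is \emph{equivalent} to the very $*$-invertibility of $A+B$ that we are after; moreover the naive move of applying the sharp condition to $A+B\supseteq A\cdot B$ can be satisfied trivially (take $H=D$, $K=A+B$) and therefore yields nothing. Thus the sharp hypothesis has to be used across enough containments to pin the factors down, and the delicate point is that stability is what converts the resulting intersection identities into the $*$-level coprimality $(A+B)^*=D$ rather than the weaker $(A+B)_v=D$ that the $v$-operation alone would give. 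A concrete warning is $k[x,y]$ with $*=d$, where $A=(x)$, $B=(y)$ satisfy every algebraic hypothesis above, yet $(A+B)^*=(x,y)\neq D$, the missing ingredient being precisely that $k[x,y]$ is not $d$-sharp.
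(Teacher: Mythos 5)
Your preliminary reductions are all correct: replacing $I,J$ by $A=I_v$, $B=J_v$ (both $*$-invertible by Proposition \ref{3a}), observing that the conclusion would follow from $*$-invertibility of $A+B$, the identity $(AB)^*=A\cap B$, and the reduction via Lemma \ref{71}(b) to the single inclusion $(AB)^*\subseteq((A^2\cap AB)+(AB\cap B^2))^*$. (The paper's proof opens with the same normalization to $*$-invertible $v$-ideals.) But the proposal has a genuine gap, and you say so yourself: the one place where the $*$-sharp hypothesis must do real work is never executed. ``I would feed containments of the shape \dots to Definition \ref{1} \dots to manufacture exactly this missing inclusion'' is a statement of intent, not an argument, and since (as you correctly note) the obvious containment $A+B\supseteq AB$ is satisfied trivially by $H=D$, no usable containment is ever exhibited. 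Your reformulations are equivalent restatements of the goal rather than progress toward it; worse, your stated plan --- feed containments whose large ideal is $*$-invertible, in the spirit of Proposition \ref{3a} --- points away from what actually works, because the ideal to which sharpness must be applied is not $*$-invertible a priori.

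The paper's proof supplies exactly the ingredient you are missing. Writing $I,J$ for the $*$-invertible $v$-ideals, it applies the $*$-sharp property to the containment $(I+J)^2\subseteq I^2+J$ --- the asymmetric ideal $I^2+J$ is the key object --- obtaining nonzero ideals $H,L$ with $(I^2+J)^*=(HL)^*$ and $H^*,L^*\supseteq I+J$. The factors are then pinned down by a colon-ideal computation: using stability, modular distributivity, and Lemma \ref{71}, one proves the claim $(I^2+J)^*:I=(I+J)^*$, whence $H^*\subseteq(I^2+J)^*:L^*\subseteq(I^2+J)^*:I=(I+J)^*$ forces $H^*=L^*=(I+J)^*$, and therefore $(I^2+J)^*=((I+J)^2)^*$. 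This identity gives $J^*\subseteq(J^2+I)^*$, and a second round of stability, modularity, Lemma \ref{71}, and $*$-invertibility of $J$ yields $D=(JJ^{-1})^*\subseteq(J+I)^*$. Discovering the containment $(I+J)^2\subseteq I^2+J$ and the colon identity that exploits it is the actual content of the proposition; without them your outline does not close.
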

\begin{proof}
Let $K$ be the quotient field of $D$. Changing $I$ by $I_v$ and $J$ by $J_v$, we may assume that $I,J$ are $*$-invertible $v$-ideals, cf. Proposition \ref{3a}.
Since $(I+J)^2 \subseteq I^2+J$ and $D$ is $*$-sharp, there exist two nonzero ideals $A$, $B$ such that $(I^2+J)^*=(AB)^*$ and $I+J\subseteq A^* \cap B^*$. We {\em claim} that $(I^2+J)^*:I=(I+J)^*$. To prove the claim, we perform the following step-by-step computation. First,
$(I^2+J)^*:I=((I^2+J)^*:_KI)\cap D=((I^2+J)I^{-1})^*\cap D=(I+JI^{-1})^*\cap D$ because $I$ is $*$-invertible. As $*$ is stable, we get $(I+JI^{-1})^*\cap D=((I+JI^{-1})\cap D)^*=(I+(JI^{-1}\cap D))^*$ by modular distributivity.
Since $I$ is $*$-invertible, we get
$(I+(JI^{-1}\cap D))^*=(I+I^{-1}(J\cap I))^*$, cf. Lemma \ref{71}.
Using the fact that $I$ is $*$-invertible (hence $v$-invertible) and Lemma \ref{71}, we derive that $(I+I^{-1}(J\cap I))^* \subseteq (I+I^{-1}(IJ)_v)^*\subseteq  (I+(II^{-1}J)_v)^*=(I+J_v)^*=(I+J)^*$. Putting all these facts together, we get $(I^2+J)^*:I\subseteq (I+J)^*$ and the other inclusion is clear. So the claim is proved.
From $(I^2+J)^*=(AB)^*$, we get $A^*\subseteq (I^2+J)^*:B^*\subseteq (I^2+J)^*:I=(I+J)^*$, so $A^*=(I+J)^*$. Similarly, we get $B^*=(I+J)^*$, hence $(I^2+J)^*=((I+J)^2)^*$.
It follows that $J^*\subseteq (I^2+J)^*=((I+J)^2)^*\subseteq  (J^2+I)^*$. So $J^*=J^*\cap (J^2+I)^*=(J\cap (J^2+I))^*=(J^2+(J\cap I))^*$ where we have used the fact that $*$ is stable and the modular distributivity. By Lemma \ref{71}, we have $I\cap J\subseteq (IJ)_v$, so we get $J^*=(J^2+(J\cap I))^*\subseteq (J^2+(IJ)_v)^*$. Since $J$ is $*$-invertible, we have $D=(JJ^{-1})^*\subseteq ((J^2+(IJ)_v)J^{-1})^* \subseteq (J+I_v)^*=(J+I)^*$.
Thus $(I+J)^*=D$.
\end{proof}

Note that from Proposition \ref{211p} we can recover easily  \cite[Proposition 10]{ADE}. Next, we give another extension of \cite[Theorem 11]{ADE} (besides Proposition \ref{77}).

\begin{proposition} \label{102}
Let $D$ be a domain and $*$ a stable star operation on $D$  such that $D$ is $*$-sharp. Then every finitely generated nonzero ideal of $D$ is $*$-invertible.
\end{proposition}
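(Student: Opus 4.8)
The plan is to prove that every finitely generated nonzero ideal $I$ is $*$-invertible by reducing to the case of two generators and then invoking the comaximality-type result just established in Proposition \ref{211p}. First I would recall from Proposition \ref{3a} that $I_v$ is already $*$-invertible, so the whole difficulty is in passing from $I_v$ to $I$ itself; equivalently, I must show $I^*=I_v$ for finitely generated $I$. Since $*$ is stable of (no assumed) finite character but stable throughout, and since $*$-invertibility of $I$ is the same as $I^*=I_v$ together with the invertibility of $I_v$, the target reduces to the equality $I^*=I_v$.

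The natural first reduction is to two generators. If I can show that every ideal $I=(a,b)$ with two generators is $*$-invertible, then a standard induction on the number of generators finishes the job: writing $I=(a_1,\dots,a_n)=H+(a_n)$ with $H=(a_1,\dots,a_{n-1})$ assumed $*$-invertible, one uses that a sum of $*$-invertible ideals behaves well and that $*$-invertibility is preserved under the $*$-multiplication identities $(IJ)^*=(I^*J^*)^*$; the key algebraic fact is that if $H$ is $*$-invertible then $I=H+(a_n)$ satisfies $(I\cdot(H^{-1}\cap (a_n)^{-1}\cdots))$-type manipulations collapse. So the heart of the matter is the two-generator case.

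For $I=(a,b)$, after clearing denominators and scaling (using axiom $(1)$, $(aI)^*=aI^*$) I would arrange to study $(1,c)=D+cD$ for a single element $c$ of the quotient field, so that $*$-invertibility of $(a,b)$ becomes $*$-invertibility of $D+cD$. The plan here is to apply Proposition \ref{211p}: I want to exhibit ideals $I,J$ with $(I+J)_v=D$ forcing $(I_v+J_v)^*=D$, chosen so that comaximality of the relevant pieces yields $((D+cD)(D+cD)^{-1})^*=D$. Concretely, writing $c=x/y$ in lowest terms relative to the $v$-operation, the ideals $(x)$ and $(y)$ are $v$-comaximal exactly when $(x,y)_v=D$, and Proposition \ref{211p} upgrades this to $((x)_v+(y)_v)^*=D$, which is the comaximality needed to split $D+cD$ as a $*$-product of its $v$-invertible pieces and conclude $(D+cD)^*=(D+cD)_v$.

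The main obstacle I anticipate is the bookkeeping that turns the raw comaximality output of Proposition \ref{211p} into a clean statement $I^*=I_v$ for the two-generator ideal: one must be careful that Proposition \ref{211p} requires $(I+J)_v=D$ as input and produces $(I_v+J_v)^*=D$, and matching these hypotheses to the genuinely arbitrary two-generator ideal $(a,b)$ (whose $v$-closure need not be principal) requires either a localization argument or a direct manipulation via Lemma \ref{71} to reduce to the comaximal situation. I expect the cleanest route is to combine Proposition \ref{3a} (giving $*$-invertibility of $I_v$, hence a well-behaved $I^{-1}$) with stability of $*$ and Lemma \ref{71}$(b)$ to rewrite $(II^{-1})^*$ as a finite intersection/product of $*$-invertible $v$-ideals that Proposition \ref{211p} renders pairwise $*$-comaximal, so that the $*$-product telescopes to $D$.
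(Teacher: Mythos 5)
Your overall architecture coincides with the paper's: Proposition \ref{3a} plus Proposition \ref{211p} to dispose of two-generated ideals, then the induction of \cite[Proposition 22.2]{G}. But the step on which you rest the two-generator case fails. You reduce $(a,b)$ to $D+cD$ and then write $c=x/y$ ``in lowest terms relative to the $v$-operation'', i.e.\ with $(x,y)_v=D$. Such a representation need not exist: extracting a common factor of two \emph{elements} is a GCD/pre-Schreier-type property that is not available here. Already $\mathbb{Z}[\sqrt{-5}]$ --- a Dedekind domain, hence $*$-sharp for the stable star operation $*=d$, so squarely within the hypotheses of the proposition --- is a counterexample: if $2/(1+\sqrt{-5})=x/y$ with $(x,y)=(x,y)_v=D$, then $P(y)=Q(x)$ where $P=(2,1+\sqrt{-5})$ and $Q=(3,1+\sqrt{-5})$, and comparing prime factorizations together with coprimality of $(x)$ and $(y)$ forces $(x)=P$, impossible since $P$ is not principal. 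So the input to Proposition \ref{211p} is never produced. Your fallback sketch --- rewriting $(II^{-1})^*$ as a product of pairwise $*$-comaximal $v$-ideals that ``telescopes'' to $D$ --- is not an argument: no such decomposition is exhibited, and neither Lemma \ref{71} nor Proposition \ref{211p} supplies one.

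The repair, which is the paper's actual proof, is to extract the common factor at the level of \emph{ideals}, exactly what Proposition \ref{3a} makes possible. Given $x,y\in D-\{0\}$, set $I=(xD+yD)_v$; by Proposition \ref{3a} it is $*$-invertible, hence $v$-invertible. The ideals $xI^{-1}$ and $yI^{-1}$ are integral (since $x,y\in I$) and are $v$-ideals (each is an element times the $v$-ideal $I^{-1}$), and $(xI^{-1}+yI^{-1})_v=((x,y)I^{-1})_v=(II^{-1})_v=D$. Proposition \ref{211p} applied to this pair of ideals then gives $(xI^{-1}+yI^{-1})^*=D$, with no element-level coprimality ever needed. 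Multiplying back by $I$ and using $(II^{-1})^*=D$ together with $(xI^{-1}+yI^{-1})^*=D$, one gets $(xD+yD)^*=((xI^{-1}+yI^{-1})I)^*=I^*=I$, so every two-generated nonzero ideal is $*$-invertible. Finally, your justification for the passage to $n$ generators (``a sum of $*$-invertible ideals behaves well'') is not a valid principle --- sums of $*$-invertible ideals are in general not $*$-invertible; what one adapts is the classical induction of \cite[Proposition 22.2]{G} (e.g.\ via the identity $(a,b)(b,c)(c,a)=(a,b,c)(ab,bc,ca)$), which is precisely what the paper invokes.
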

\begin{proof}
Let $x,y\in D-\{0\}$. By Proposition \ref{3a}, the ideal $I=(xD+yD)_v$ is $*$-invertible (hence $v$-invertible), so $(xI^{-1}+yI^{-1})_v=D$. By Proposition \ref{211p} we get $(xI^{-1}+yI^{-1})^*=D$, hence $I=((xI^{-1}+yI^{-1})I)^*=(xD+yD)^*$ because $I$ is $*$-invertible. Thus every two-generated nonzero ideal of $D$ is $*$-invertible. Now  the proof of \cite[Proposition 22.2]{G} can be easily adapted to show that every finitely generated nonzero ideal of $D$ is $*$-invertible.
\end{proof}

\begin{remark}
Under the assumptions  of Proposition \ref{102}, it does not follow that $D$ is a P$*$MD. Indeed, let $D$ be a completely integrally closed domain which is not a PVMD (such a domain is constructed in  \cite{D}).  The $v$-operation on $D$ is stable (cf. \cite[Theorem 2.8]{ACl}) and   $D$ is $v$-sharp (cf. Corollary \ref{100}).
\end{remark}

Let $D$ be a domain with quotient field $K$.
According to \cite{AZ}, a family   $\mathcal{F}$ of nonzero prime ideals of $D$ is called    {\em independent of finite character family (IFC family)},
if $(1)$ $D=\cap_{P\in \mathcal{F}}D_P$, $(2)$ every nonzero $x\in D$ belongs to only finitely many members of
$\mathcal{F}$ and $(3)$ every nonzero prime ideal of $D$ is contained in at most one member of $\mathcal{F}$. The follwing result extends \cite[Theorem 15]{ADE}. 

\begin{proposition}\label{200}
Let $D$ be a domain and $*$ a finite character star operation on $D$. Assume that

$(a)$ every $x\in D-\{0\}$ is contained in only finitely many maximal $*$-ideals, and

$(b)$ for every $M\in Max_*(D)$, $D_M$ is a  valuation domain with value group a complete subgroup of the reals.
\\ Then $D$ is a $\tilde{*}$-sharp domain and hence $*$-sharp.
\end{proposition}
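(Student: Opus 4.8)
The plan is to exploit that $\tilde{*}$ is a stable star operation of finite character with $X^{\tilde{*}}=\cap_{M\in Max_*(D)}XD_M$ for every $X\in F(D)$, so that $\tilde{*}$-closures are detected locally at the maximal $*$-ideals. Consequently it suffices, given nonzero ideals $I\supseteq AB$, to produce nonzero integral ideals $H,J$ of $D$ such that, for every $M\in Max_*(D)$,
\[
AD_M\subseteq HD_M,\qquad BD_M\subseteq JD_M,\qquad HJD_M=ID_M;
\]
for then $H^{\tilde{*}}=\cap_M HD_M\supseteq A$, $J^{\tilde{*}}\supseteq B$, and $(HJ)^{\tilde{*}}=\cap_M HJD_M=\cap_M ID_M=I^{\tilde{*}}$. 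The ``hence $*$-sharp'' clause will then follow from $\tilde{*}\le *$ together with Proposition \ref{81}(a). A first reduction: the maximal $*$-ideals $M$ with $ID_M\ne D_M$ are finite in number, since $AB\subseteq I\subseteq M$ with $M$ prime forces $A\subseteq M$ or $B\subseteq M$, and by hypothesis $(a)$ only finitely many maximal $*$-ideals contain $A$ or $B$. I shall call these finitely many ideals the \emph{active} ones.

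Next I would record that $Max_*(D)$ is an IFC family: condition $(1)$ holds because $D=D^{\tilde{*}}=\cap_{M}D_M$, condition $(2)$ is precisely hypothesis $(a)$, and condition $(3)$ follows from $(b)$, since each $D_M$ is a rank-one (hence one-dimensional) valuation domain, so a nonzero prime $P$ of $D$ lying in two distinct maximal $*$-ideals $M_1,M_2$ would give $PD_{M_i}=M_iD_{M_i}$, whence $P=M_1=M_2$, a contradiction. For each active $M$, the localization $D_M$ is a sharp valuation domain by \cite[Proposition 6]{ADE}; applying sharpness to $ID_M\supseteq(AD_M)(BD_M)$ yields integral ideals $H_M\supseteq AD_M$ and $J_M\supseteq BD_M$ of $D_M$ with $H_MJ_M=ID_M$. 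For the (infinitely many) inactive $M$ I set $H_M=J_M=D_M$.

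It remains to glue these finitely many prescribed local ideals into global ones, that is, to find integral ideals $H,J$ of $D$ with $HD_M=H_M$ and $JD_M=J_M$ for all $M\in Max_*(D)$. This patching is the heart of the argument and the step I expect to be hardest. The natural candidates are
\[
H=\{x\in D: x\in H_MD_M\text{ for every active }M\},\qquad J=\{x\in D: x\in J_MD_M\text{ for every active }M\},
\]
for which the containments $HD_M\subseteq H_M$ and $JD_M\subseteq J_M$ are immediate; the real work lies in the reverse inclusions together with the triviality $HD_M=D_M=JD_M$ at every inactive $M$. These are exactly what the approximation theorem for the independent finite-character family $\{v_M\}_{M\in Max_*(D)}$ delivers (cf. \cite{AZ}): independence and finite character allow one to prescribe the finitely many nonzero local data while keeping elements integral at every other maximal $*$-ideal. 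The genuine subtlety is that, when the value group $G_M$ is non-discrete (equal to $\mathbb{R}$), the local ideal $ID_M$, and hence $H_M$ or $J_M$, may fail to be finitely generated, so the glued ideals must be correspondingly non-finitely generated and the approximation must match not merely the values but also the cut type of each $H_M,J_M$; this is precisely where the completeness of $G_M$ in hypothesis $(b)$ is used. Once $HD_M=H_M$ and $JD_M=J_M$ have been verified for every $M$, the three displayed local identities hold, $D$ is $\tilde{*}$-sharp, and the proof is complete.
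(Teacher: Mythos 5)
Your proposal tracks the paper's own proof up to the decisive step: you make the same reduction to checking the three equalities locally at maximal $*$-ideals, the same observation that $Max_*(D)$ is an IFC family, and the same use of \cite[Proposition 6]{ADE} to get sharpness of $D_M$ at the finitely many ``active'' ideals, finishing with $\tilde{*}\leq *$ and Proposition \ref{81}(a) exactly as the paper does. The only divergence is in the gluing: the paper takes the \emph{product} $H=H'_1\cdots H'_n$ of the contractions $H'_i=H_i\cap D$, while you take their \emph{intersection}. Both candidates do work, and for the same reason, so that difference is cosmetic.

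The genuine gap is that you never verify the reverse inclusions $H_M\subseteq HD_M$, nor the equality $HD_M=D_M$ at inactive $M$ --- the step you yourself call the heart of the argument. You defer it to an unspecified ``approximation theorem'' for IFC families, adding that completeness of the value groups is needed there to match the ``cut type'' of $H_M$. This is off target twice. First, completeness is consumed entirely by \cite[Proposition 6]{ADE} (it is precisely the condition making each $D_M$ sharp); it plays no role whatsoever in the gluing. Second, valuation-theoretic approximation theorems prescribe values of finitely many elements at independent valuations; they do not by themselves yield the ideal-level equalities $HD_M=H_M$ for possibly non-finitely-generated $H_M$. What actually closes the gap --- and what the paper cites --- is \cite[Lemma 2.3]{AZ}: for an IFC family, $P_i$ is the \emph{only} maximal $*$-ideal containing the contraction $H'_i=H_i\cap D$. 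Granting this, the verification is short: localization commutes with finite intersections since $D_P$ is $D$-flat, $(H_i\cap D)D_{P_i}=H_i$ because $H_i$ is an ideal of the localization, and $H'_iD_P=D_P$ whenever $P\neq P_i$ by the lemma; hence $HD_{P_j}=H_j$ for each active $P_j$ and $HD_P=D_P$ for every inactive $P$, and likewise for $J$. So your candidates are correct, but as written the proof is incomplete at exactly the step requiring an argument, and the tool you point to is not the one that supplies it.
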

\begin{proof}
By \cite{Gr}, $D=\cap_M D_M$ where $M$ runs in the set of maximal $*$-ideals. Since each $D_M$ is a  valuation domain with value group a complete subgroup of the reals, every $M$ has height one.
It follows that $Max_*(D)$  is an IFC family. Consider the $\tilde{*}$ operation, i.e. $I\mapsto I^{\tilde{*}}=\cap_M ID_M$. We show that $D$ is $\tilde{*}$-sharp.
Let $I,A,B$ be nonzero ideals  of $D$ such that $I\supseteq AB$.
Let $P_1$,...,$P_n$ the maximal $*$-ideals of $D$ containing $AB$.
Since $D_{P_i}$ is sharp, there exist $H_i$, $J_i$ ideals of $D_{P_i}$ such that $ID_{P_i}=H_iJ_i$, $H_i\supseteq AD_{P_i}$ and
$J_i\supseteq BD_{P_i}$ for all $i$ between $1$ and $n$.
Set $H'_i=H_i\cap D$, $J'_i=J_i\cap D$, $i=1,...,n$,
$H=H'_1\cdots H'_n$ and $J=J'_1\cdots J'_n$.
By \cite[Lemma 2.3]{AZ}, $P_i$ is the only element of $Max_*(D)$ containing $H'_i$ (resp. $J'_i$), thus it can be checked that $ID_P=(HJ)D_P$, $HD_P\supseteq AD_P$ and $JD_P\supseteq BD_P$ for each $P\in Max_*(D)$. So, we have $I^{\tilde{*}}=(HJ)^{\tilde{*}}$, $H^{\tilde{*}}\supseteq A$ and $J^{\tilde{*}}\supseteq B$. Consequently, $D$ is $\tilde{*}$-sharp. By Proposition \ref{81}, $D$ is  $*$-sharp because  $\tilde{*}\leq *$, cf. \cite[Theorem 2.4]{AC}.
\end{proof}

\begin{proposition}\label{822}
Let $D$ be a contable domain and $*$ a finite character star operation on $D$ such that $D$ is a P$*$MD and  $I_v$ is $*$-invertible for each nonzero ideal $I$ of $D$. Then  every nonzero element of $D$ is contained in only finitely many maximal $*$-ideals.
\end{proposition}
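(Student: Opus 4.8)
The plan is to argue by contradiction: supposing some nonzero $x\in D$ lies in infinitely many maximal $*$-ideals, I will exhibit a nonzero ideal whose divisorial closure fails to be $*$-invertible, contradicting the hypothesis. First I normalize the setting. Since $D$ is a P$*$MD and $*$ has finite character, $*=t$ (as in the proof of Corollary \ref{111}, cf. \cite[Proposition 3.15]{FJS}), the maximal $*$-ideals are the maximal $t$-ideals, and $D_M$ is a valuation domain for each $M\in Max_t(D)$. The hypothesis now reads: every $I_v$ is $t$-invertible. Because $t$ has finite character, a $t$-invertible ideal is of finite type, so under the hypothesis \emph{every} divisorial ideal $I_v$ has the form $(a_1,\dots,a_m)_t$. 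Hence it suffices to construct a single $v$-ideal that is \emph{not} of finite $t$-type. The mechanism for detecting this is a finite-character union principle: if $L_1\subsetneq L_2\subsetneq\cdots$ is a strictly ascending chain of $v$-ideals, then $L=\bigcup_n L_n$ satisfies $L_t=L$ (any finitely generated $H\subseteq L$ lies in some $L_{n_0}$, so $H_v\subseteq L_{n_0}$), and $L$ cannot be of finite $t$-type, since $L=(a_1,\dots,a_m)_t$ with $a_i\in L$ would force all $a_i$ into a single $L_{n_0}$ and hence $L=L_{n_0}$, against strictness.

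The construction is where countability enters. Enumerate the (at most countably many) maximal $t$-ideals containing $x$ as $M_1,M_2,\dots$ and fix an enumeration of $D$. Distinct maximal $t$-ideals are pairwise comaximal, and each $D_{M_n}$ is a valuation domain, so I can control finitely many localizations at a time by prime avoidance and approximation. Using the inexhaustible supply $M_1,M_2,\dots$, I would build inductively a strictly ascending chain of finite-type (hence $t$-invertible) divisorial ideals $L_1\subsetneq L_2\subsetneq\cdots$, at each stage adjoining a generator that strictly lowers the local value at a fresh maximal ideal drawn from the reservoir while leaving the earlier localizations untouched; the enumeration of $D$ is used to diagonalize so that strict growth occurs at infinitely many steps and so that the union $L=\bigcup_n L_n$ is itself divisorial. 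The union principle above then shows $L=L_v$ is not of finite $t$-type.

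Taking $I=L$, the divisorial ideal $I_v=L$ is not $t$-invertible, contradicting the hypothesis; therefore $x$ can lie in only finitely many maximal $*$-ideals. I expect the genuine difficulty to lie entirely in the construction of the chain. Two demands must be met simultaneously: first, producing infinitely many strict inclusions while never exhausting the pool of available maximal ideals (this is exactly where comaximality, prime avoidance, and, crucially, countability through a diagonal enumeration are needed, since with only finitely many maximal ideals or without a countable listing the chain would stabilize); and second, ensuring the ascending union remains a $v$-ideal, so that the non-finite-type obstruction is visible on a genuine divisorial ideal. This second point matters because the hypothesis constrains $I_v$ rather than $I$, and an ascending union of divisorial ideals need not be divisorial in general. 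Carrying out the local value bookkeeping in the valuation domains $D_{M_n}$ so as to satisfy both demands at once is, I believe, the crux of the proof.
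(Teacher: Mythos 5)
Your preliminary reductions are sound: in a P$*$MD with $*$ of finite character one does get $*=t$ (every finitely generated ideal is $*$-invertible, hence $H^*=H_v$, hence $I^*=I_t$ for all $I$), and your union principle is correct: a strictly ascending chain of divisorial ideals whose union is \emph{divisorial} would contradict the hypothesis that every $I_v$ is $t$-invertible, since $t$-invertible ideals are of finite $t$-type. But the proof stops exactly where the work begins: the existence of such a chain is asserted ("I would build inductively\dots"), never established, and this is a genuine gap rather than a routine verification. Two things are missing. First, divisoriality of $L=\bigcup_n L_n$ is a \emph{global} condition --- you need, for every $y\in K\setminus L$, a principal fractional ideal containing all $L_n$ but excluding $y$ --- whereas your inductive bookkeeping controls only finitely many localizations $D_{M_i}$ per stage; no mechanism is given that converts the local control into $L_v=L$. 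That this cannot come for free is shown by a rank-one valuation domain $V$ with value group $\mathbb{R}$: it satisfies "every $I_v$ is $t$-invertible" (divisorial ideals there are principal), its maximal ideal $M$ is a strictly ascending union of principal --- hence $t$-invertible divisorial --- ideals, and yet $M_v=V$, so no contradiction arises. Unions of $t$-invertible divisorial ideals failing to be divisorial is precisely the phenomenon the hypotheses permit, so the divisoriality of your union must be engineered, and that is the entire difficulty. Second, your inductive step assumes you can choose a generator that lowers the value at a fresh $M_j$ "while leaving the earlier localizations untouched"; an element of $D$ has constraints at \emph{all} maximal $t$-ideals simultaneously, and justifying this kind of independent control over an infinite family of maximal $t$-ideals is essentially the content of the key result the paper invokes, namely \cite[Corollary 5]{DZ}. (A minor further point: it is not justified that only countably many maximal $t$-ideals contain $x$ --- primes are merely subsets of a countable set --- but this is harmless, as a countably infinite subfamily suffices.)

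For contrast, the paper's proof avoids chains entirely and gets divisoriality for free by using \emph{intersections} instead of unions. By \cite[Corollary 5]{DZ} (this is where the P$*$MD hypothesis enters), failure of finite $*$-character produces a nonzero $z$ and an infinite family $(I_n)_{n\geq 1}$ of mutually $*$-comaximal $*$-invertible proper ideals containing $z$. For each nonempty $\Lambda\subseteq\mathbb{N}$, the ideal $I_\Lambda=\bigcap_{n\in\Lambda}I_n$ is automatically divisorial, hence $*$-invertible by hypothesis, hence of finite type; comaximality together with this $*$-invertibility shows $\Lambda\mapsto I_\Lambda$ is injective. This yields uncountably many finite-type $*$-ideals in a countable domain, a contradiction. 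So where you try to diagonalize countability into a construction, the paper uses countability only once, at the end, as a counting bound; if you want to salvage your approach, you should expect to need something of the strength of \cite[Corollary 5]{DZ} anyway.
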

\begin{proof}
Deny. By \cite[Corollary 5]{DZ},  there exists a nonzero element $z$ and an infinite family $(I_n)_{n\geq 1}$ of $*$-invertible proper ideals containing $z$ which are mutually $*$-comaximal (that is, $(I_m+I_n)^*=D$ for every $m\neq n$). For each nonempty set  $\Lambda$ of natural numbers, consider the $v$-ideal $I_\Lambda=\cap_{n\in \Lambda} I_n$ (note that $z\in I_\Lambda$). By hypothesis, $I_\Lambda$ is $*$-invertible. We claim that $I_\Lambda\neq I_{\Lambda'}$ whenever $\Lambda$, $\Lambda'$ are distinct nonempty sets of natural numbers.
Deny. Then there exists a nonempty set  of natural numbers $\Gamma$ and some $k\notin \Gamma$ such that $I_k\supseteq I_\Gamma$. Consider the ideal  $H=(I_k^{-1} I_\Gamma)^*\supseteq I_\Gamma$. If $n\in \Gamma$, then $I_n \supseteq I_\Gamma =(I_kH)^*$, so $I_n \supseteq H$, because $(I_n+I_k)^*=D$. It follows that $I_\Gamma\supseteq H$, so $I_\Gamma = H=(I_k^{-1} I_\Gamma)^*$. Since $I_\Gamma$ is $*$-invertible, we get $I_k=D$, a contradiction. Thus the claim is proved. But then it follows that $\{I_\Lambda\mid \emptyset\neq \Lambda\subseteq \mathbb{N}\}$ is an uncountable set of $*$-invertible ideals. This leads to a contradiction, because $D$ being countable, it has countably many $*$-ideals of finite type.
\end{proof}

\begin{corollary}\label{845}
Let $D$ be a countable domain and $*$ a finite character  stable star operation on $D$  such that $D$ is $*$-sharp.
Then $D$ is a $*$-Dedekind domain.
\end{corollary}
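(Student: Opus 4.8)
The plan is to combine the structural results already established with the two separate ways in which the countability hypothesis can be exploited. First I would record, via Proposition~\ref{77}, that $D$ is a P$*$MD and that each localization $D_M$ at a maximal $*$-ideal $M$ is a valuation domain whose value group is a complete subgroup of $\mathbb{R}$; and, via Proposition~\ref{3a}, that $I_v$ is $*$-invertible for every nonzero ideal $I$. With these two facts in hand, Proposition~\ref{822} applies directly and yields that every nonzero element of $D$ lies in only finitely many maximal $*$-ideals, so that $D$ has finite $*$-character. The countability hypothesis now enters a second, decisive time: since $D$ is countable so is its quotient field, hence each value group $v_M(K^{\times})$ is a \emph{countable} complete subgroup of $\mathbb{R}$. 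The only such groups are $\{0\}$ and the discrete infinite cyclic ones, and $M\neq 0$ rules out the first; thus every $D_M$ is a DVR. This passage from ``complete value group'' to ``discrete value group'' is exactly what the countable case buys us and is the real engine of the argument.

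Next I would prove that every $*$-ideal is of finite type. Since $*$ is stable of finite character, $*=\tilde{*}$ by \cite[Corollary~4.2]{A}, so $I^{*}=\cap_{M}ID_M$ with $M$ ranging over $Max_*(D)$ (and the same formula holds for any ideal I construct). Fix a nonzero ideal $I$ and a nonzero $x\in I$. Because each $D_M$ is a DVR, every nonzero $ID_M$ is principal, and the only maximal $*$-ideals with $ID_M\neq D_M$ are the finitely many $M_1,\dots,M_k$ containing $I$ (a subset of the finitely many containing $x$). For each $M_i$ I would pick $a_i\in I$ realizing the minimal value $v_{M_i}$ on $I$, and for each maximal $*$-ideal $N$ that contains $x$ but not $I$ I would pick $b_N\in I\setminus N$; let $J$ be the finitely generated ideal generated by $x$ together with all the $a_i$ and the (finitely many) $b_N$. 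A case check over the three kinds of maximal $*$-ideal — those missing $x$, the ideals $N$, and the $M_i$ — then shows $JD_M=ID_M$ for every $M$, whence $I^{*}=J^{*}$ is of finite type.

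Finally, since $J$ is a finitely generated nonzero ideal, $*$ is stable, and $D$ is $*$-sharp, Proposition~\ref{102} gives that $J$ is $*$-invertible; therefore $I^{*}=J^{*}$, and hence $I$ itself, is $*$-invertible. As every nonzero fractional ideal is a scalar multiple of an integral one, this shows every nonzero fractional ideal of $D$ is $*$-invertible, i.e.\ $D$ is $*$-Dedekind. The step I expect to be the main obstacle is the finite-type claim: the generators of $J$ must be chosen so that $JD_M=ID_M$ holds not only at the finitely many maximal $*$-ideals containing $I$ (where the DVR structure makes $ID_M$ principal and the minimal-value generators $a_i$ do the job) but also at the maximal $*$-ideals that contain the auxiliary element $x$ without containing $I$, where one must insert the $b_N$ to prevent $J$ from being swallowed.
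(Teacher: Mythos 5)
Your proposal is correct, and its first half coincides exactly with the paper's proof: both invoke Proposition~\ref{77} to get the P$*$MD property and the valuation structure of each $D_M$, then Propositions~\ref{3a} and~\ref{822} to get finite $*$-character, and then use countability a second time to see that each value group, being a countable complete subgroup of $\mathbb{R}$, must be infinite cyclic, so each $D_M$ is a DVR. Where you diverge is the final step. The paper simply cites \cite[Theorem 4.11]{EFP} to pass from ``finite $*$-character P$*$MD with DVR localizations at maximal $*$-ideals'' to ``$*$-Dedekind.'' You instead prove this implication by hand: using $*=\tilde{*}$ (stability plus finite character) you show every $*$-ideal is of finite type by the classical argument --- pick a minimal-value generator $a_i$ at each of the finitely many maximal $*$-ideals containing $I$, and an element $b_N\in I\setminus N$ at each maximal $*$-ideal $N$ containing the auxiliary element $x$ but not $I$, then check $JD_M=ID_M$ locally --- and conclude via Proposition~\ref{102} that the finitely generated ideal $J$, hence $I^*=J^*$, is $*$-invertible. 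Your case analysis is complete (every maximal $*$-ideal either misses $x$, is one of the $N$'s, or is one of the $M_i$'s) and the minimal value exists precisely because the value groups are discrete, so the argument is sound. What your route buys is self-containedness: the conclusion rests only on results proved in the paper rather than on the external semistar-Dedekind characterization; the price is the extra local computation, which the citation to \cite{EFP} absorbs. (A small shortcut available to you: since $D$ is a P$*$MD and $*$ has finite character, $J$ is automatically $*$-invertible, so the appeal to Proposition~\ref{102} could be bypassed.)
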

\begin{proof}
We may assume that $D$ is not a field.
By Proposition \ref{77}, $D$ is a P$*$MD. Now Propositions \ref{3a} and \ref{822} show that every nonzero element of $D$ is contained in only finitely many maximal $*$-ideals. Let $M$ be a maximal $*$-ideal of $D$. By Proposition \ref{77}, $D_M$ is a countable valuation domain with value group $\mathbb{Z}$ or $\mathbb{R}$, so $D_M$ is a DVR. Thus $D$ is a $*$-Dedekind domain, cf. \cite[Theorem 4.11]{EFP}.
\end{proof}

\section{$t$-sharp domains.}

The $t$-operation is a very useful  tool in multiplicative ideal theory. In this section we give some results  which are specific for  the $t$-sharp domains.

\begin{proposition}\label{11}
Let $D$ be a $t$-sharp domain. Then $D$ is a PVMD of $t$-dimension $\leq 1$ and $D_M$ is a  valuation domain with value group a complete subgroup of the reals for each maximal $t$-ideal $M$ of $D$.
\end{proposition}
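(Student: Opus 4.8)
The overall strategy is to deduce the statement from Proposition~\ref{77}. That proposition requires a finite character \emph{stable} star operation, and while the $t$-operation always has finite character it need not be stable at the outset; this is the one real obstacle. However, the $t$-operation becomes stable (indeed $t=w$) as soon as $D$ is known to be a PVMD. Thus the heart of the argument is to prove that a $t$-sharp domain is a PVMD, after which Proposition~\ref{77} applies directly with $*=t$ and delivers every assertion at once.

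To prove that $D$ is a PVMD I would show that every nonzero finitely generated ideal $I$ of $D$ is $t$-invertible. Since $D$ is $t$-sharp, Proposition~\ref{3a} with $*=t$ gives that $I_v$ is $t$-invertible for \emph{every} nonzero ideal $I$. The point is that when $I$ is finitely generated one has $I_t=I_v$: indeed $t=v_f$, so $I_t=\bigcup_H H_v$ over the finitely generated subideals $H\subseteq I$, and taking $H=I$ yields $I_v\subseteq I_t$, while $t\le v$ gives the reverse inclusion. Hence $I_t=I_v$ is $t$-invertible, and since $I$ and $I_t$ share the same inverse $(D:I)=(D:I_t)$, the ideal $I$ itself is $t$-invertible. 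Therefore every nonzero finitely generated ideal is $t$-invertible, i.e. $D$ is a PVMD.

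Now that $D$ is a PVMD, the $t$-operation coincides with the $w$-operation and is in particular a stable star operation of finite character (this is the standard fact underlying the use of \cite[Proposition~3.15]{FJS} in Corollary~\ref{111}). Applying Proposition~\ref{77} with $*=t$ then shows that $D_M$ is a valuation domain whose value group is a complete subgroup of the reals for each $M\in Max_t(D)$, and that $D$ is a P$t$MD of $t$-dimension $\leq 1$; since $t=v_f$, a P$t$MD is precisely a PVMD, which is exactly the claim. As an alternative to invoking stability of $t$, one can argue locally: by Proposition~\ref{81}(b) each $D_M$ with $M\in Max_t(D)$ is $t$-sharp, and since $D_M$ is a valuation domain (as $D$ is a PVMD) the $t$-operation on $D_M$ equals the identity operation, so $D_M$ is a sharp domain and \cite[Theorem~11]{ADE} yields the completeness of the value group and the bound on dimension directly.
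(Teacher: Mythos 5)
Your proposal is correct, and its first half coincides exactly with the paper's argument: the paper also gets the PVMD property by noting $I_t=I_v$ for finitely generated $I$ and invoking Proposition \ref{3a} with $*=t$. For the local structure the routes diverge. The paper never passes through Proposition \ref{77}: it localizes directly, using Proposition \ref{81}(b) to see that $D_M$ is $t$-sharp for each maximal $t$-ideal $M$, observes that $D_M$ is a valuation domain (as $D$ is a PVMD) on which $t=d$, so $D_M$ is sharp, and concludes by \cite[Proposition 6]{ADE}, the characterization of sharp valuation domains. This is precisely your ``alternative'' argument, except that the more economical citation is \cite[Proposition 6]{ADE} rather than \cite[Theorem 11]{ADE} (the latter also works when applied to the local domain $D_M$). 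Your primary route---first upgrading $t$ to a stable finite character operation via $t=w$ on a PVMD, then quoting Proposition \ref{77} with $*=t$---is also sound, and it has the small advantage of delivering the P$t$MD statement and the $t$-dimension bound in one stroke; its cost is reliance on the nontrivial fact that $t=w$ for PVMDs, whose correct source in the paper is \cite[Theorem 3.5]{Kg} as used in Proposition \ref{103} (your parenthetical attribution to \cite[Proposition 3.15]{FJS} is slightly off, since that result concerns $*=t$ in a P$*$MD, not $t=w$). Both arguments are valid; the paper's is the leaner of the two, since it needs only the elementary observation that $t=d$ on a valuation domain.
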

\begin{proof}
Let $I$ be finitely generated nonzero ideal of $D$. Then $I_v=I_t$, so Proposition \ref{3a} shows that $I_t$ is $t$-invertible. Thus $D$ is a PVMD.
Let $M$ be a maximal $t$-ideal  of $D$.
By  part $(b)$ of Proposition \ref{81},  $D_M$ is a  $t$-sharp valuation domain, so $D_M$ is sharp since for valuation domains $t=d$. Now apply \cite[Proposition 6]{ADE}.
\end{proof}

\begin{proposition}\label{103}
Let $D$ be a domain. Then $D$ is $t$-sharp if and only if $D$ is $w$-sharp.
\end{proposition}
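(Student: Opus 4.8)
The plan is to prove the two implications separately, using that $w \le t$ for the easy direction and the multiplicative structure of a $t$-sharp domain for the hard one.

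First, the implication ``$w$-sharp $\Rightarrow$ $t$-sharp'' is immediate. Since $I_w \subseteq I_t$ for every $I\in F(D)$, we have $w \le t$, so Proposition \ref{81}$(a)$ (applied with $*_1=w$ and $*_2=t$) shows that a $w$-sharp domain is $t$-sharp. For the converse, suppose $D$ is $t$-sharp. By Proposition \ref{11}, $D$ is a PVMD. I would then reduce everything to the claim that on a PVMD the $t$- and $w$-operations coincide, i.e. $I_t = I_w$ for every nonzero ideal $I$. Granting this, the defining condition of $w$-sharpness (given $I\supseteq AB$, find $H,J$ with $I_w=(HJ)_w$, $H_w\supseteq A$, $J_w\supseteq B$) becomes literally the defining condition of $t$-sharpness after replacing each $t$-closure by the equal $w$-closure; hence the $t$-sharp hypothesis yields the $w$-sharp conclusion with the very same $H,J$, and the proof is complete.

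To establish $t=w$ on a PVMD I would argue as follows. Both $t=v_f$ and $w$ are of finite character, so for each nonzero ideal $I$ one has $I_t=\bigcup J_v$ and $I_w=\bigcup J_w$, the unions taken over nonzero finitely generated $J\subseteq I$; it therefore suffices to prove $J_w=J_v$ for finitely generated $J$. In a PVMD every such $J$ is $t$-invertible, hence $w$-invertible, so $JD_M$ is principal in the valuation ring $D_M$ for each $M\in Max_t(D)=Max_w(D)$. Combining the local description $J_w=\bigcap_{M\in Max_t(D)} JD_M$ with the fact that a $t$-invertible ideal is divisorial and is recovered from its localizations, namely $J_v=\bigcap_{M} JD_M$, one obtains $J_v=J_w$, and thus $I_t=I_w$ for all $I$.

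The delicate point, and the step I expect to be the main obstacle, is precisely this identity $J_v=\bigcap_{M\in Max_t(D)} JD_M$ for a finitely generated (equivalently $t$-invertible) ideal $J$: it amounts to saying that a $t$-invertible ideal is divisorial and equals the intersection of its localizations at the maximal $t$-ideals, and it rests on the valuation structure of the $D_M$ supplied by Proposition \ref{11}. Since this is the well-known coincidence $t=w$ on a PVMD, it may instead simply be quoted from the literature on PVMDs, so no genuinely new computation is required beyond invoking Proposition \ref{11}.
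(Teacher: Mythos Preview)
Your proof is correct and follows essentially the same route as the paper: the easy direction via $w\le t$ and Proposition \ref{81}$(a)$, and the converse by invoking Proposition \ref{11} to get that $D$ is a PVMD and then using that $t=w$ on a PVMD. The paper simply cites Kang \cite[Theorem 3.5]{Kg} for the coincidence $t=w$ on a PVMD rather than sketching it, so your extra paragraph is unnecessary (and your remark that it ``rests on the valuation structure of the $D_M$ supplied by Proposition \ref{11}'' slightly overstates things: only the PVMD conclusion is used, not the value-group information).
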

\begin{proof}
If $D$ is $w$-sharp, then $D$ is $t$-sharp (cf. Proposition \ref{81}) because $w\leq t$. Conversely, assume that $D$ is $t$-sharp. By Proposition \ref{11}, $D$ is a PVMD. But in a PVMD the $w$-operation coincides with the $t$-operation (cf. \cite[Theorem 3.5]{Kg}),  so $D$ is also $w$-sharp.
\end{proof}

Combining  Corollary \ref{111} and Proposition \ref{103}, we get

\begin{corollary}\label{400}
A domain  $D$ is a Krull domain if and only if $D$  is a $t$-sharp TV domain.
\end{corollary}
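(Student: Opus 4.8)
The plan is to prove both implications by reducing everything to results already established in the excerpt, so that no genuinely new argument is needed. The forward direction is essentially immediate. Assume $D$ is a Krull domain. By Proposition \ref{3}, every Krull domain is $t$-sharp, so $D$ is $t$-sharp. Moreover, a Krull domain satisfies the ascending chain condition on integral divisorial ideals, hence is a Mori domain, and a Mori domain is a TV domain as recalled in the introduction; thus $D$ is a TV domain. This settles the ``only if'' part with no real computation.

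For the reverse direction, assume $D$ is a $t$-sharp TV domain, and aim to apply Corollary \ref{111}. That corollary requires a \emph{finite character stable} star operation, and although the $t$-operation is of finite character it need not be stable, so the first step is to pass to the $w$-operation. By Proposition \ref{103}, $D$ is $w$-sharp, and as recorded in the introduction the $w$-operation is a stable star operation of finite character. Applying Corollary \ref{111} with $*=w$ to the TV domain $D$ then yields that $D$ is $w$-Dedekind, i.e.\ every nonzero fractional ideal of $D$ is $w$-invertible.

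It remains only to identify $w$-Dedekind domains with Krull domains. Here I would invoke the fact, recalled in the introduction, that a nonzero fractional ideal is $w$-invertible if and only if it is $t$-invertible; consequently $D$ is $w$-Dedekind if and only if it is $t$-Dedekind. Finally, the $t$-Dedekind domains are precisely the Krull domains (as already used in the proof of Proposition \ref{3}, cf.\ \cite[Theorem 3.6]{Kg1}), so $D$ is a Krull domain, completing the equivalence.

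The one subtle point — and the reason this is not a one-line appeal to Corollary \ref{111} with $*=t$ — is that the $t$-operation fails to be stable in general, so the detour through the $w$-operation via Proposition \ref{103} is essential; everything else is a direct chain of previously proved equivalences, which is why the authors can phrase the result simply as a combination of Corollary \ref{111} and Proposition \ref{103}.
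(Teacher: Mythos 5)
Your proof is correct and follows exactly the route the paper intends: the paper's proof is just the one-line remark "Combining Corollary \ref{111} and Proposition \ref{103}," and your write-up fills in precisely that combination (Krull $\Rightarrow$ $t$-sharp Mori/TV via Proposition \ref{3}; conversely $t$-sharp TV $\Rightarrow$ $w$-sharp $\Rightarrow$ $w$-Dedekind $=$ $t$-Dedekind $=$ Krull). Your observation that the detour through $w$ is needed because $t$ need not be stable is exactly the point of citing Proposition \ref{103}.
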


\begin{corollary}\label{300}
If $D$ is a countable $t$-sharp domain, then $D$ is a Krull domain.
\end{corollary}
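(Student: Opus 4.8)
The plan is to reduce the statement to Corollary \ref{845} by passing from the $t$-operation to the $w$-operation, and then to translate the resulting $w$-Dedekind conclusion back into the Krull property. The reason for the detour is structural: Corollary \ref{845} requires a star operation that is simultaneously of \emph{finite character} and \emph{stable}, and while the $t$-operation is of finite character it need not be stable; by contrast, the $w$-operation is always a stable star operation of finite character.

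First I would invoke Proposition \ref{103}: since $D$ is $t$-sharp, it is $w$-sharp. Now the hypotheses of Corollary \ref{845} are all in place with $*=w$, namely $D$ is countable (by assumption), $w$ is a finite character stable star operation, and $D$ is $w$-sharp. Applying Corollary \ref{845} therefore yields that $D$ is $w$-Dedekind, i.e. every nonzero fractional ideal of $D$ is $w$-invertible.

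Finally I would convert $w$-Dedekind into Krull. Recall from the introduction that a nonzero fractional ideal is $w$-invertible if and only if it is $t$-invertible; hence every nonzero fractional ideal of $D$ is $t$-invertible, so $D$ is $t$-Dedekind. By \cite[Theorem 3.6]{Kg1} the $t$-Dedekind domains are exactly the Krull domains (as already used in the proof of Proposition \ref{3}), and therefore $D$ is a Krull domain. The argument is essentially frictionless; the only point demanding attention is precisely the passage to $w$, justified by the equivalence of $t$-sharpness and $w$-sharpness (Proposition \ref{103}) together with the stability of $w$, which is what makes Corollary \ref{845} applicable where a direct appeal with $*=t$ would not be.
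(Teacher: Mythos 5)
Your proposal is correct and follows essentially the same route as the paper: pass from $t$-sharp to $w$-sharp via Proposition \ref{103}, note that $w$ is stable and of finite character, and apply Corollary \ref{845}. The only difference is that you spell out the final conversion ($w$-Dedekind $\Rightarrow$ $t$-Dedekind $\Rightarrow$ Krull via the $w$-invertible/$t$-invertible equivalence and \cite[Theorem 3.6]{Kg1}), a step the paper compresses into a single phrase.
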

\begin{proof}
Assume that $D$ is a countable $t$-sharp domain.
By Proposition \ref{103},  $D$ is $w$-sharp. Moreover the $w$-operation is stable and of finite character, cf. \cite[Corollary 2.11]{AC}. By Corollary \ref{845}, $D$ is a $t$-Dedekind domain, that is a Krull domain.
\end{proof}


By \cite{Zac}, a domain $D$ is called  a {\em pre-Krull domain} if $I_v$ is $t$-invertible for each nonzero ideal $I$ of $D$ (see also \cite{L} where a pre-Krull domain is called a $(t,v)$-Dedekind domain).

\begin{proposition} \label{3x}
A domain $D$ is  $t$-sharp   if and only if

$(a)$ $D$ is pre-Krull, and

$(b)$ for all nonzero ideals  $I$,$A$,$B$   of $D$ such that $I_v=D$ and $I\supseteq AB$, there exist nonzero ideals $H$ and $J$ such that $I_t=(HJ)_t$, $H_t\supseteq A$ and $J_t\supseteq B$.
\end{proposition}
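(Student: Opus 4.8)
The plan is to treat the two implications separately, the forward one being essentially formal and the converse carrying all the weight. For the forward direction, assume $D$ is $t$-sharp. Condition (a) is exactly Proposition \ref{3a} applied with $*=t$: it gives that $I_v$ is $t$-invertible for every nonzero ideal $I$, which is the definition of a pre-Krull domain. Condition (b) is nothing but the defining property of a $t$-sharp domain read only for those triples $I,A,B$ carrying the extra hypothesis $I_v=D$, so it holds verbatim. Thus all the content is in the converse.

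For the converse, assume (a) and (b), and let $I\supseteq AB$ with $I,A,B$ nonzero; I must produce $H,J$ with $I_t=(HJ)_t$, $H_t\supseteq A$, $J_t\supseteq B$. First I would record that (a) already forces $D$ to be a PVMD: for finitely generated $I$ one has $I_v=I_t$, so pre-Krull says every finitely generated ideal is $t$-invertible. Hence $t=w$ is a stable star operation of finite character, a fact I will lean on for the bookkeeping with $t$-closures. The idea is then to use pre-Krull to strip off the $t$-invertible ``size'' of $I$ and land in the regime governed by (b). Set $L:=I_v$, which is $t$-invertible, and put $\mathcal I:=L^{-1}I$. Then $\mathcal I$ is an integral ideal with $\mathcal I_v=(L^{-1}I_v)_v=(L^{-1}L)_v=D$ and $(L\mathcal I)_t=((LL^{-1})_tI)_t=I_t$, while $\mathcal I\supseteq L^{-1}AB$. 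So, modulo the $t$-invertible factor $L$, the ideal $I$ has been replaced by a $v$-trivial ideal.

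The difficulty — and the step I expect to be the main obstacle — is that the natural factorization $L^{-1}AB=(L^{-1}A)B$ has a \emph{fractional} first factor $L^{-1}A$, so (b), stated for integral ideals, does not apply directly. The trick I would use to overcome this is to relocate the $t$-invertible $v$-closure $A_v$ between the two factors. Write $A^{\flat}:=A_v^{-1}A$ and $C:=L^{-1}A_vB$, so that $(L^{-1}AB)_t=(A^{\flat}C)_t$. Here $A^{\flat}$ is integral and $v$-trivial (since $A\subseteq A_v$), and crucially $C$ is integral as well: indeed $C_v=(L^{-1}(A_vB)_v)_v$ and $(A_vB)_v=(AB)_v\subseteq I_v=L$, whence $C_v\subseteq(L^{-1}L)_v=D$. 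Therefore $\mathcal I_t\supseteq A^{\flat}C$ is a containment of a $v$-trivial ideal over a product of two genuine integral ideals, and (b) supplies integral $H_1,J_1$ with $\mathcal I_t=(H_1J_1)_t$, $(H_1)_t\supseteq A^{\flat}$ and $(J_1)_t\supseteq C$.

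It then remains to transport this factorization back through the $t$-invertible ideals $L$ and $A_v$. I would set $H:=A_vH_1$, which is integral with $H_t=(A_v(H_1)_t)_t\supseteq(A_vA^{\flat})_t=A_t\supseteq A$, and take $J$ to be an integral ideal with $J_t=(LA_v^{-1}J_1)_t$. A direct $t$-multiplication computation, using repeatedly that $L$ and $A_v$ are $t$-invertible, gives $(HJ)_t=(A_vH_1\cdot LA_v^{-1}J_1)_t=(LH_1J_1)_t=(L\mathcal I_t)_t=I_t$ and $J_t\supseteq(LA_v^{-1}C)_t=B_t\supseteq B$. The one genuinely delicate point left is that $J$ can be realized by an \emph{integral} ideal without disturbing these relations; I would settle this by passing to the maximal $t$-ideals, where, $D$ being a PVMD, $t=w=\cap_M D_M$ with each $D_M$ a valuation domain and the required relations reduce to elementary comparisons, so that an appropriate integral representative of $LA_v^{-1}J_1$ (for instance a suitable intersection with $D$) preserves both $(HJ)_t=I_t$ and $J_t\supseteq B$. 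I expect this final integrality bookkeeping, together with the fractional-factor regrouping, to be where essentially all the care is needed.
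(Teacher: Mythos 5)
Your forward direction is fine, and your converse has the right skeleton (peel off the $t$-invertible part, apply (b) to a $v$-trivial ideal, recombine) --- this is also the paper's skeleton. But the step you yourself flagged as ``delicate'', realizing $J$ with $J_t=(LA_v^{-1}J_1)_t$ by an \emph{integral} ideal, is a genuine gap, not bookkeeping: condition (b) gives no upper bound on $J_1$, only the lower bounds $(J_1)_t\supseteq C$ and $\mathcal I_t=(H_1J_1)_t$, so $LA_v^{-1}J_1$ can fail to be $t$-equivalent to any integral ideal, and no localization argument can repair this. Concretely, let $D=V$ be a valuation domain with value group $\mathbb{R}$ (such $V$ is sharp, hence $t$-sharp, so (a) and (b) hold, and $t=d$ on $V$); take $I=B=V$ and $A=\{x\in V: v(x)>1\}$, so $A_v=\pi V$ with $v(\pi)=1$. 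Then $L=V$, $\mathcal I=V$, $A^\flat$ is the maximal ideal, $C=\pi V$, and since $\mathcal I_t=(H_1J_1)_t$ reads $H_1J_1=V$, condition (b) \emph{forces} $H_1=J_1=V$. Your recipe then demands an integral $J$ with $J_t=(LA_v^{-1}J_1)_t=\pi^{-1}V$, which does not exist; and your proposed repair $J=LA_v^{-1}J_1\cap D=V$ destroys the product: with $H=A_vH_1=\pi V$ one gets $(HJ)_t=\pi V\neq V=I_t$. (The proposition holds trivially here via $H=J=V$, but that solution is not produced by your construction, which is all that (b), used as a black box, allows you to work with.)

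What is missing is exactly the paper's first move: since a pre-Krull domain is a PVMD, $D$ is a \emph{$t$-Schreier} domain, cf. \cite[Corollary 6]{DZ1}, so the containment of $t$-invertible ideals $I_v\supseteq A_vB_v$ can be refined to $I_v=(HJ)_t$ with $t$-invertible $H,J$ satisfying $H_t\supseteq A_v$ and $J_t\supseteq B_v$. This aligns the factorization of the invertible part with \emph{both} $A$ and $B$ simultaneously --- precisely what your factor $L=I_v$ (aligned with neither, then forcibly realigned with $A$ by inserting $A_v$ and $A_v^{-1}$) cannot achieve. With the aligned factorization both correction factors $AH^{-1}$ and $BJ^{-1}$ are integral, (b) applies to $M=(II^{-1})_t\supseteq (AH^{-1})(BJ^{-1})$ (note $M_v=(I_vI^{-1})_v=D$), and the recombination $HN$, $JP$ multiplies integral ideals only, so the integrality problem never arises. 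Without this $t$-Schreier input, or some substitute for it, the obstacle at the end of your argument is not removable.
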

\begin{proof}
The implication $(\Rightarrow)$ follows from Proposition \ref{3a}. Conversely, assume that $(a)$ and $b$ hold.
Let $I$, $A$ and $B$ be nonzero ideals of $D$ such that $I\supseteq AB$.
Then $I_v \supseteq A_vB_v$ and $I_v$, $A_v$, $B_v$ are $t$-invertible ideals, cf. $(a)$.
Since $D$ is a pre-Krull domain,  $D$ is a  PVMD and hence a $t$-Schreier domain cf. \cite[Corollary 6]{DZ1}.
So there exist  $t$-invertible ideals $H$ and $J$ such that $I_v=(HJ)_t$, $H_t\supseteq  A_v$ and $J_t\supseteq  B_v$.
We have $(II^{-1})_t = (IH^{-1}J^{-1})_t  \supseteq (AH^{-1})(BJ^{-1})$. Set $M=(II^{-1})_t$ and note that  $AH^{-1}$ and $BJ^{-1}$ are integral ideals.
Since $I_v$ is $t$-invertible, $M_v=(I_vI^{-1})_v=D$.
By $(b)$, there exist  nonzero ideals $N$ and $P$ such that $M_t=(NP)_t$, $N_t\supseteq AH^{-1}$ and $P_t\supseteq BJ^{-1}$.
Summing up, we get
$I_t=(I_vM)_t=(HJNP)_t=((HN)(JP))_t$, $(HN)_t\supseteq (AHH^{-1})_t=A_t$ and $(JP)_t\supseteq (BJJ^{-1})_t=B_t$.
\end{proof}

\begin{lemma}\label{332}  If D is an integrally closed domain and $I$ a nonzero ideal of $D[X]$ such that $I_v=D[X]$, then $I\cap D \neq 0$.
\end{lemma}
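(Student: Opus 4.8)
The plan is to argue by contraposition: assuming $I\cap D=0$, I will exhibit a rational function $\phi\in K(X)\setminus D[X]$ (where $K=\mathrm{Frac}(D)$) with $\phi I\subseteq D[X]$. This shows $I^{-1}\supsetneq D[X]$, hence $I_v=(I^{-1})^{-1}\neq D[X]$, contradicting the hypothesis. Here I use the standard equivalence $I_v=D[X]\iff I^{-1}=D[X]$, which follows from $I^{-1}=(I_v)^{-1}$: if $I_v=D[X]$ then $I^{-1}=D[X]^{-1}=D[X]$, and conversely $I_v=(I^{-1})^{-1}=D[X]^{-1}=D[X]$.

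First I would pick $f\in I$ nonzero of minimal degree $n$. Because $I\cap D=0$, no nonzero constant lies in $I$, so $n\ge 1$; write $a$ for the leading coefficient of $f$. A gcd/degree argument in the PID $K[X]$ shows that $f$ divides every element of $I$ in $K[X]$: if some $g\in I$ were not a multiple of $f$, then $e=\gcd(f,g)$ would have degree $<n$ and lie in $fK[X]+gK[X]\subseteq IK[X]$, and clearing denominators would give a nonzero element $de\in I$ (with $d\in D\setminus\{0\}$) of degree $<n$, contradicting minimality. Thus every $g\in I$ factors as $g=fh_g$ with $h_g\in K[X]$, and since $g\in D[X]$ we have $fh_g\in D[X]$.

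The heart of the argument, and the only place integral closure enters, is the content step: for $D$ integrally closed and $h\in K[X]$ with $fh\in D[X]$, one has $c(f)c(h)\subseteq D$ (the integrally-closed form of the Gauss content formula $c(fh)_v=(c(f)c(h))_v$, cf. \cite{G}). In particular $a\,c(h)\subseteq c(f)c(h)\subseteq D$ since $a\in c(f)$, so $ah\in D[X]$. I would then set $\phi=a/f$. For any $g=fh_g\in I$ this gives $\phi g=a\,h_g\in D[X]$ by the content step, so $\phi\in I^{-1}$; and $\phi\notin D[X]$, for otherwise $a=\phi f\in fD[X]$, which is impossible because every nonzero element of $fD[X]$ has degree $\ge n\ge 1$ while $a$ is a nonzero constant. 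This produces the desired $\phi\in I^{-1}\setminus D[X]$ and completes the contradiction.

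The main obstacle is establishing the content step in a way that genuinely exploits integral closure: the implication ``$fh\in D[X]\Rightarrow ah\in D[X]$'' is \emph{false} without it (for non-integrally-closed $D$ there are uppers to zero violating the conclusion of the lemma), so the argument must invoke the integrally-closed version of Gauss's formula rather than the naive inclusion $c(fh)\subseteq c(f)c(h)$. Everything else—the reduction to finding $\phi$, the minimality/gcd argument, and the degree bookkeeping that forces $\phi\notin D[X]$—is elementary.
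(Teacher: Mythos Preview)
Your proof is correct. Both your argument and the paper's reach the contradiction by producing an element $a/f\in I^{-1}\setminus D[X]$, but the routes differ. The paper invokes \cite[Theorem~2.1]{AKZ} (the ``agreeable domain'' result) as a black box to obtain $f\in D[X]-\{0\}$ and $a\in D-\{0\}$ with $J:=(a/f)I\subseteq D[X]$ and $J\cap D\neq 0$; it then uses $I_v=D[X]$ to force $a/f\in D[X]$, hence $a/f\in D$ by a degree count, so $J\subseteq I$, contradicting $J\cap D\neq 0=I\cap D$. You instead construct the pair $(f,a)$ explicitly---a minimal-degree element of $I$ and its leading coefficient---and verify $(a/f)I\subseteq D[X]$ directly via the integrally-closed Gauss content formula $c(fh)_v=(c(f)c(h))_v$. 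In effect you are reproving the relevant piece of the cited theorem. The paper's version is shorter by outsourcing the hard step; yours is self-contained and makes the role of integral closure completely transparent, which is a genuine advantage given that the lemma fails without that hypothesis.
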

\begin{proof} Assume that  $I \cap D = 0$. By \cite[Theorem 2.1]{AKZ}, there exist $f\in D[X]-\{0\}$ and $a\in D-\{0\}$ such that $J:=(a/f)I \subseteq D[X]$ and $J\cap D \neq 0$.  We get  $(a/f)D[X]=(a/f)I_v \subseteq D[X]$, hence  $a/f \in D[X]$ and thus $a/f\in D$, because $a\in D-\{0\}$. We get $J\subseteq I$  which is a contradiction because $J\cap D  \neq 0$   and   $I \cap D = 0$.
\end{proof}

\begin{proposition} \label{331}
A domain $D$ is $t$-sharp if and only if $D[X]$ $t$-sharp.
\end{proposition}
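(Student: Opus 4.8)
The plan is to prove both implications through the intrinsic description of $t$-sharpness furnished by Proposition \ref{3x}: a domain is $t$-sharp exactly when it is pre-Krull and satisfies the sharp-type splitting condition (b), i.e. the condition restricted to ideals $I$ with $I_v=D$. Thus I would reduce the equivalence ``$D$ $t$-sharp $\iff$ $D[X]$ $t$-sharp'' to the two separate equivalences: (i) $D$ pre-Krull $\iff$ $D[X]$ pre-Krull, and (ii) condition (b) of Proposition \ref{3x} holds in $D$ $\iff$ it holds in $D[X]$. Throughout I may assume $D$, and hence $D[X]$, integrally closed: a $t$-sharp domain is completely integrally closed by Corollary \ref{100}, and this property transfers between $D$ and $D[X]$, so that Lemma \ref{332} is available on the polynomial side. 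It is also convenient to record the clean behaviour of divisorial closure under polynomial extension, namely $(I[X])^{-1}=I^{-1}[X]$ and therefore $(I[X])_v=I_v[X]$ for every nonzero ideal $I$ of $D$; in particular $I_v=D$ if and only if $(I[X])_v=D[X]$, which is exactly what matches the hypotheses of condition (b) on the two rings.

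For the pre-Krull equivalence I would use that $D$ is a PVMD if and only if $D[X]$ is a PVMD (a standard fact, and recall that a $t$-sharp domain is a PVMD by Proposition \ref{11}), so that on both rings $t$-invertibility of a divisorial ideal reduces to its being of finite type, checkable $t$-locally. Combined with $(I[X])_v=I_v[X]$, the fact that the contraction of a $t$-ideal is a $t$-ideal, and the descent of $t$-invertibility from $I[X]$ to $I$, this should yield pre-Krullness of $D$ from that of $D[X]$. For the ascent I would invoke the Houston--Zafrullah description of the maximal $t$-ideals of $D[X]$: each is either an extended ideal $M[X]$ with $M$ a maximal $t$-ideal of $D$, or an upper to zero contracting to $0$, the latter living over the PID $K[X]$ and hence harmless. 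Divisorial ideals then split into an extended part governed by $D$ and an upper-to-zero part governed by $K[X]$, and pre-Krullness of $D$ controls the first while $K[X]$ controls the second.

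For condition (b) the descent is the easier half: given $I\supseteq AB$ in $D$ with $I_v=D$, extend to $I[X]\supseteq A[X]B[X]$, note $(I[X])_v=D[X]$, apply condition (b) in $D[X]$ to obtain ideals $\mathcal H,\mathcal J$, and contract by setting $H=\mathcal H_t\cap D\supseteq A$ and $J=\mathcal J_t\cap D\supseteq B$; the identity $(I[X])_t\cap D=I_t$ together with $(\mathcal H\mathcal J)_t=(I[X])_t$ should then force $I_t=(HJ)_t$. For the ascent I would start from $\mathcal I\supseteq\mathcal A\mathcal B$ in $D[X]$ with $\mathcal I_v=D[X]$; here Lemma \ref{332} is the decisive tool, since it guarantees $\mathcal I\cap D\neq 0$, so that $\mathcal I$ genuinely interacts with $D$ and the splitting problem can be transferred to the $v$-coprime, content-type ideals of $D$, where the $t$-sharpness of $D$ (through condition (b) for $D$) produces the required factors, which are then pushed back up to $D[X]$.

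The main obstacle will be this last ascent of condition (b): unlike divisorial closure, the $t$-closure of a product does not extend and contract transparently between $D$ and $D[X]$, so the delicate point is to show that the factors $\mathcal H,\mathcal J$ may be taken essentially extended from $D$ (modulo the harmless upper-to-zero part) and that both inclusions $I_t\subseteq(HJ)_t$ and $(HJ)_t\subseteq I_t$ survive contraction. Reducing first to the $w$-operation via Proposition \ref{103}, where $(I[X])_w=I_w[X]$ and the $w$-closure is stable, is the maneuver I expect to make this bookkeeping manageable, while Lemma \ref{332} is precisely what prevents a $v$-coprime ideal from sliding entirely into an upper to zero and thereby losing contact with $D$.
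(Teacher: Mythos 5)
Your skeleton is essentially the paper's: reduce via Proposition \ref{3x} to pre-Krullness plus condition (b), transfer pre-Krullness between $D$ and $D[X]$ (the paper simply cites \cite[Theorem 3.3]{L} for this), and use Lemma \ref{332} to guarantee $\mathcal{I}\cap D\neq 0$. But there is a genuine gap, and it sits exactly where you flag the ``main obstacle'': you never supply the tool that makes the transfer of condition (b) work in either direction, namely \cite[Theorem 3.2]{AKZ}, which says that over an integrally closed domain $D$ every $t$-ideal of $D[X]$ meeting $D$ in a nonzero ideal is extended from $D$, i.e.\ is of the form $M_tD[X]$ for some nonzero ideal $M$ of $D$ (applicable here because a $t$-sharp domain is completely integrally closed by Corollary \ref{100}, so both $D$ and $D[X]$ are integrally closed). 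Concretely, in your descent step the contraction argument yields only one inclusion: from $H[X]\subseteq\mathcal{H}_t$ and $J[X]\subseteq\mathcal{J}_t$ you get $(HJ)_t\subseteq(\mathcal{H}\mathcal{J})_t\cap D=I_t$, but the reverse inclusion $I_t\subseteq(HJ)_t$ requires the upper bounds $\mathcal{H}_t\subseteq H_t[X]$ and $\mathcal{J}_t\subseteq J_t[X]$ --- exactly the assertion that $\mathcal{H}_t$ and $\mathcal{J}_t$ are extended, which neither $(I[X])_t\cap D=I_t$ nor stability of $w$ implies, since a priori $\mathcal{H}$ may contain polynomials none of whose coefficients lie in $\mathcal{H}_t$. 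The same missing theorem blocks your ascent: Lemma \ref{332} gives $\mathcal{I}\cap D\neq 0$, but to ``transfer the splitting problem to $D$'' you must write $\mathcal{I}_t$, $\mathcal{A}_t$, $\mathcal{B}_t$ as extensions of ideals of $D$ and then push the resulting factorization back up, and that is again \cite[Theorem 3.2]{AKZ}. You also need a small preliminary replacement $\mathcal{A}\mapsto\mathcal{I}+\mathcal{A}$, $\mathcal{B}\mapsto\mathcal{I}+\mathcal{B}$, because your hypotheses do not force $\mathcal{A}\cap D\neq 0$: in $\mathbb{Z}[X]$ take $\mathcal{I}=\mathcal{B}=(2,X)$ and $\mathcal{A}=(X)$, where indeed $\mathcal{I}_v=\mathbb{Z}[X]$ and $\mathcal{I}\supseteq\mathcal{A}\mathcal{B}$ but $\mathcal{A}\cap\mathbb{Z}=0$.

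Your proposed repairs do not close this hole. The formulas $(ID[X])^{-1}=I^{-1}D[X]$, $(ID[X])_v=I_vD[X]$, $(ID[X])_t=I_tD[X]$ (and the analogous $w$-formula) are true but concern ideals already known to be extended; the problem is precisely the ideals produced by, or fed into, the sharpness condition on $D[X]$, about which nothing of that sort is known in advance. Likewise, the Houston--Zafrullah picture of $Max_t(D[X])$ as uppers to zero together with ideals $M[X]$ is a $t$-local statement, and turning local factorizations into a global one is exactly what needs a finite-character hypothesis (compare Proposition \ref{200}), which is unavailable here. Once \cite[Theorem 3.2]{AKZ} is invoked, your outline collapses to the paper's proof --- and note that the converse direction then needs neither Proposition \ref{3x} nor Lemma \ref{332}: the factors $H,J$ of $(ID[X])_t$ satisfy $H_t\supseteq A\neq 0$ and $J_t\supseteq B\neq 0$ with $A,B\subseteq D$, so they automatically meet $D$ and the cited theorem applies to them directly.
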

\begin{proof} $(\Rightarrow).$ Set $\Omega=D[X]$. Let $I$,$A$,$B$ be nonzero ideals of $\Omega$ such that  $I\supseteq AB$.
By Proposition \ref{3x}, $D$ is pre-Krull, so $D[X]$ is pre-Krull, cf. \cite[Theorem 3.3]{L}. Applying Proposition \ref{3x} for $\Omega$, we can assume that $I_v=\Omega$. Changing $A$ by $I+A$ and $B$ by $I+B$, we may assume that $A,B \supseteq I$. By Lemma \ref{332} we get $I\cap D\neq 0$, so $A\cap D\neq 0$ and $B\cap D\neq 0$. By \cite[Theorem 3.2]{AKZ}, we have $I_t=I'_t\Omega$, $A_t=A'_t\Omega$ and $B_t=B'_t\Omega$ for some nonzero ideals $I',A'$ and $B'$ of D. From $I\supseteq AB$, we get $I'_t\Omega=I_t\supseteq A_tB_t=(A'_tB'_t)\Omega$, hence $I'_t\supseteq A'B'$.
As $D$ is $t$-sharp, there exist nonzero ideals $H$ and $J$ of $D$ such that $I'_t=(HJ)_t$, $H_t\supseteq  A'$ and $J_t\supseteq  B'$.
Hence $I_t=(HJ\Omega)_t$, $(H\Omega)_t\supseteq  A$ and $(J\Omega)_t\supseteq  B$. $(\Leftarrow).$ Let $I$,$A$,$B$ be nonzero ideals of $D$ such that  $I\supseteq AB$. As $D[X]$ $t$-sharp, there exist nonzero ideals $H$ and $J$ of $\Omega$ such that $(I\Omega)_t=(HJ)_t$, $H_t\supseteq  A$ and $J_t\supseteq  B$.
Since  $H_t\supseteq A$ and $A\neq 0$, we derive that $H_t\cap D\neq 0$,  hence $H_t=(M\Omega)_t$ for some nonzero ideal $M$ of $D$, cf. \cite[Theorem 3.2]{AKZ}. Similarly, $J_t=(N\Omega)_t$ for some nonzero ideal $N$ of $D$. Combining the relations above, we get $I_t=(MN)_t$, $M_t\supseteq  A$ and $N_t\supseteq  B$.
\end{proof}

\begin{remark}
Notice that we do not have a ``$d$-analogue'' of  Proposition \ref{331} because  a sharp domain  has dimension $\leq 1$ (see  \cite[Theorem 11]{ADE}). But remark that we do  have a ``$v$-analogue'' of  Proposition \ref{331}. Indeed, a domain $D$ is $v$-sharp  if and only if $D$ is completely integrally closed
(cf. Corollary \ref{100}) and  $D$ is completely integrally closed if and only if so is $D[X]$. Similarly, $D$ is $v$-sharp  if and only if the power series ring $D[[X]]$ is $v$-sharp.
\end{remark}

Denote by $N_v$ the multiplicative set of $D[X]$ consisting of all nonzero polynomials $a_0+a_1X+\cdots +a_nX^n$ such that $(a_0,a_1,...,a_n)_v=D$. The ring $D[X]_{N_v}$ was studied in \cite{Kg}.

\begin{proposition}\label{133}
A domain $D$ is $t$-sharp if and only if $D[X]_{N_v}$ is sharp.
\end{proposition}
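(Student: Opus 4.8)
The plan is to reduce both implications to the ideal-theoretic dictionary between $D$ and $R:=D[X]_{N_v}$ furnished by Kang's theory \cite{Kg}, after first forcing $D$ to be a PVMD so that $t=w$ on $D$ and $R$ becomes B\'ezout. Concretely, I would record the following facts from \cite{Kg} about $R=D[X]_{N_v}$: $(i)$ $R$ is a Pr\"ufer (in fact B\'ezout) domain if and only if $D$ is a PVMD; $(ii)$ $Max(R)=\{MR\mid M\in Max_t(D)\}$ with $R_{MR}=D_M(X)$, the Nagata ring of $D_M$; and $(iii)$ when $D$ is a PVMD, the assignment $I\mapsto IR$ is an inclusion-preserving bijection from the $t$-ideals of $D$ onto the ideals of $R$, with inverse $\mathfrak A\mapsto \mathfrak A\cap D$, satisfying $IR\cap D=I_t$, $(IR)(JR)=(IJ)R$ and $IR=I_tR$. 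The point is that $D$ will be a PVMD in \emph{both} directions: in the forward direction by Proposition \ref{11}, and in the backward direction because a sharp domain is Pr\"ufer by \cite[Theorem 11]{ADE}, whence $R$ Pr\"ufer forces $D$ to be a PVMD by $(i)$. Thus $(iii)$ is available throughout.

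For $(\Rightarrow)$, assume $D$ is $t$-sharp and take nonzero ideals $\mathfrak I,\mathfrak A,\mathfrak B$ of $R$ with $\mathfrak I\supseteq\mathfrak A\mathfrak B$. Writing $I=\mathfrak I\cap D$, $A=\mathfrak A\cap D$, $B=\mathfrak B\cap D$ (nonzero $t$-ideals), $(iii)$ gives $\mathfrak I=IR$, $\mathfrak A=AR$, $\mathfrak B=BR$ and $\mathfrak A\mathfrak B=(AB)R$; contracting $\mathfrak I\supseteq\mathfrak A\mathfrak B$ yields $I=I_t\supseteq (AB)_t\supseteq AB$. Applying $t$-sharpness to $I\supseteq AB$ produces $H,J$ with $I_t=(HJ)_t$, $H_t\supseteq A$, $J_t\supseteq B$; extending back, $\mathfrak I=IR=(HJ)_tR=(HJ)R=(HR)(JR)$, with $HR=H_tR\supseteq AR=\mathfrak A$ and $JR\supseteq\mathfrak B$. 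Hence $R$ is sharp. For $(\Leftarrow)$, assume $R$ is sharp and take $I\supseteq AB$ in $D$; then $IR\supseteq (AR)(BR)$, and sharpness of $R$ furnishes ideals $\mathfrak H,\mathfrak J$ with $IR=\mathfrak H\mathfrak J$, $\mathfrak H\supseteq AR$, $\mathfrak J\supseteq BR$. Setting $H=\mathfrak H\cap D$, $J=\mathfrak J\cap D$, item $(iii)$ gives $\mathfrak H=HR$, $\mathfrak J=JR$ and $IR=(HJ)R$, so that $I_t=(HJ)_t$; moreover $H=\mathfrak H\cap D\supseteq AR\cap D=A_t\supseteq A$ and likewise $J\supseteq B$, so $H_t\supseteq A$ and $J_t\supseteq B$. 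Thus $D$ is $t$-sharp, as required.

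The manipulations above are entirely formal; the real content, and the step I expect to be the main obstacle, is item $(iii)$ — that for a PVMD $D$ every ideal of $R=D[X]_{N_v}$ is extended from $D$ and that contraction computes the $t$-closure, $IR\cap D=I_t$. I would either cite this bijection directly from \cite{Kg}, or else derive it from the structure in $(ii)$: since every nonzero ideal $\mathfrak A$ of the Pr\"ufer domain $R$ satisfies $\mathfrak A=\bigcap_{M\in Max_t(D)}\mathfrak A R_{MR}$, it suffices to check locally that $\mathfrak A R_{MR}=(\mathfrak A\cap D)D_M(X)$, which holds because $D_M$ is a valuation domain and $D_M(X)$ is a valuation domain with the same value group, so each of its ideals is extended from $D_M$. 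Once $(iii)$ is in hand the remainder of the proof is immediate, and (being a PVMD statement) it is also consistent with the identification of $t$-sharp and $w$-sharp domains in Proposition \ref{103}.
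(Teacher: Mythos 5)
Your proposal is correct and follows essentially the same route as the paper: both directions are first reduced to the case where $D$ is a PVMD (forward via Proposition~\ref{11}, backward via \cite[Theorem 11]{ADE} plus Kang's theorem that $D[X]_{N_v}$ Pr\"ufer forces $D$ to be a PVMD), and then the conclusion is drawn from Kang's Theorem~3.14 correspondence $I\mapsto ID[X]_{N_v}$ between $t$-ideals of $D$ and ideals of $D[X]_{N_v}$. The only difference is presentational: the paper invokes this correspondence abstractly as an isomorphism of ordered monoids, whereas you spell out the resulting transfer of the sharpness conditions ideal by ideal.
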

\begin{proof}
If $D$ is $t$-sharp, then $D$ is a PVMD, cf. Proposition  \ref{11}.
If $D[X]_{N_v}$ is sharp, then $D[X]_{N_v}$ is a Prufer domain (cf. \cite[Theorem 11]{ADE}) hence $D$ is a PVMD, cf. \cite[Theorem 3.7]{Kg}. So we may assume from the beginning that $D$ is a PVMD. Note that the $t$-sharp  property of $D$ is in fact a property of the ordered monoid of all integral $t$-ideals of $D$ under the $t$-multiplication. Similarly, the sharp  property of $D[X]_{N_v}$ is  a property of the ordered monoid of all integral ideals of $D$ under the usual multiplication. Since $D$ is a PVMD, these two monoids are isomorphic (cf. \cite[Theorem 3.14]{Kg}), so the proof is complete.
\end{proof}

We end our paper with a (partial) power series analogue of Proposition \ref{133}. A lemma is in order.

\begin{lemma}\label{101}
Let $D\subseteq E$ be a domain extension and every ideal of $E$ is extended from $D$. If $D$ is sharp then $E$ is also sharp.
\end{lemma}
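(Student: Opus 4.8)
The plan is to transfer the sharp condition from $E$ down to $D$ by contracting the relevant ideals to $D$, invoke the sharpness of $D$ there, and then extend the resulting factorization back up to $E$. Throughout I will use the hypothesis in the form: an ideal $\mathcal{J}$ of $E$ is extended from $D$ exactly when $\mathcal{J}=(\mathcal{J}\cap D)E$.

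First I would fix nonzero ideals $\mathcal{I},\mathcal{A},\mathcal{B}$ of $E$ with $\mathcal{I}\supseteq\mathcal{A}\mathcal{B}$ and set $I=\mathcal{I}\cap D$, $A=\mathcal{A}\cap D$, $B=\mathcal{B}\cap D$. By hypothesis $\mathcal{I}=IE$, $\mathcal{A}=AE$ and $\mathcal{B}=BE$, and since each of $\mathcal{I},\mathcal{A},\mathcal{B}$ is nonzero, each contraction $I,A,B$ is a nonzero ideal of $D$. I would also record the two elementary facts $(AE)(BE)=ABE$ (using that $E$ is a ring with $1$ containing $D$) and $IE\cap D=I$ (which holds precisely because $\mathcal{I}=IE$ is extended from $D$, so that $IE\cap D=\mathcal{I}\cap D=I$).

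The key step is the descent of the containment. From $\mathcal{A}\mathcal{B}=ABE\subseteq IE=\mathcal{I}$ I intersect with $D$ to obtain $AB\subseteq ABE\cap D\subseteq IE\cap D=I$, so that $I\supseteq AB$ holds inside $D$. Now the sharpness of $D$ furnishes ideals $A'\supseteq A$ and $B'\supseteq B$ of $D$ with $I=A'B'$.

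Finally I would extend back to $E$: $\mathcal{I}=IE=(A'B')E=(A'E)(B'E)$, and putting $\mathcal{A}'=A'E$, $\mathcal{B}'=B'E$ yields nonzero ideals of $E$ with $\mathcal{A}'\supseteq AE=\mathcal{A}$, $\mathcal{B}'\supseteq BE=\mathcal{B}$ and $\mathcal{I}=\mathcal{A}'\mathcal{B}'$, which is exactly the sharp condition for $E$. The only place requiring care is the descent step, and it rests entirely on the two extension identities above; both are routine, so I do not expect a genuine obstacle. The whole content of the lemma is the clean interaction between the sharp condition and the ``every ideal extended'' hypothesis, which lets containments and factorizations pass freely between $D$ and $E$.
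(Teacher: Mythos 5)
Your proof is correct and follows essentially the same route as the paper's: contract the ideals of $E$ to $D$, apply sharpness of $D$ to the contracted containment, and extend the resulting factorization back to $E$. The only cosmetic difference is that you normalize each ideal of $E$ as the extension of its own contraction (via the identity $\mathcal{J}=(\mathcal{J}\cap D)E$), whereas the paper writes the ideals as $IE$, $AE$, $BE$ for arbitrary ideals $I,A,B$ of $D$ and works with $C=IE\cap D$, using $CE=IE$ without needing $C=I$.
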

\begin{proof}
Let $I,A,B$ be nonzero ideals  of $D$ such that $IE\supseteq ABE$. Then $C=IE \cap D \supseteq  AB$. As $D$ is sharp, we have $C=HJ$ with $H,J$ ideals of $D$ such that  $H\supseteq A$ and $J\supseteq B$.  We get $IE=CE=HJE$,  $HE\supseteq AE$ and  $J\supseteq BE$.
\end{proof}

Let $D$ be a $t$-sharp domain which is not a field. 
By Proposition \ref{11}, $D$ is PVMD with $t$-dimension one. Hence \cite[Proposition 3.3]{AK2} shows that $c(fg)_t=(c(f)c(g))_t$ (thus $c(fg)_v=(c(f)c(g))_v$)
for every $f,g\in D[[X]]-\{0\}$,  where $c(f)$ is the ideal generated by the coefficients of $f$.
Then $N'_v=\{ f\in D[[X]]-\{0\}\mid c(f)_v=D\}$
is a multiplicative subset of the power series ring $D[[X]]$.
 The fraction ring $D[[X]]_{N'_v}$ was studied in \cite{AK2} and \cite{L}. Note that $D\subseteq D[X]_{N_v}\subseteq D[[X]]_{N'_v}$, where $N_v=\{ f\in D[X]-\{0\}\mid c(f)_v=D\}$.

\begin{proposition}\label{1024}
If  $D$ is a $t$-sharp domain, then $D[[X]]_{N'_v}$ is sharp and every ideal of $D[[X]]_{N'_v}$ is extended from $D$.
\end{proposition}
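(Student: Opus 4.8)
The plan is to reduce the power series statement to the polynomial case already treated in Proposition~\ref{133}. Write $R=D[[X]]_{N'_v}$. If $D$ is a field then so is $R$ and there is nothing to prove, so I assume $D$ is not a field; by Proposition~\ref{11}, $D$ is then a PVMD of $t$-dimension one, which is exactly what makes the content formula $c(fg)_v=(c(f)c(g))_v$ recalled above available. The heart of the argument is a \emph{content lemma}: for every $f\in D[[X]]-\{0\}$ one has $fR=c(f)_vR$. Granting this, every ideal is extended from $D$, because each $u\in N'_v$ is a unit of $R$, so an arbitrary ideal $\mathfrak{I}$ of $R$ is generated by the numerators of its elements; writing each generator as $\phi\in D[[X]]-\{0\}$ we get $\mathfrak{I}=\sum_\phi \phi R=\sum_\phi c(\phi)_vR=\big(\sum_\phi c(\phi)_v\big)R=JR$, where $J=\sum_\phi c(\phi)_v$ is an ideal of $D$.

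To prove the content lemma I would exploit that $c(f)_v$ is $t$-invertible (Proposition~\ref{3a}), hence of finite type, so that its inverse can be realized by a genuine \emph{polynomial}. Concretely, choose $s\in D-\{0\}$ and $d_1,\dots,d_r\in D$ with $(d_1,\dots,d_r)_v=s\,c(f)_v^{-1}$, and set $g=d_1+d_2X+\dots+d_rX^{r-1}\in D[X]$, so that $c(g)_v=s\,c(f)_v^{-1}$. The content formula, together with the $t$-invertibility of $c(f)_v$, then gives $c(fg)_v=(c(f)c(g))_v=(s)$, whence every coefficient of $fg$ is divisible by $s$ and $fg=sh$ for some $h\in D[[X]]$ with $c(h)_v=D$, i.e.\ $h\in N'_v$ a unit of $R$. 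Reading this as an equality of principal ideals, $fR\cdot gR=sR$. Now I invoke the polynomial case inside $D[X]_{N_v}\subseteq R$: by the content formula in the Nagata ring (cf.\ \cite[Theorem 3.14]{Kg}), $gD[X]_{N_v}=c(g)_vD[X]_{N_v}$, and extending to $R$ yields $gR=c(g)_vR=s\,c(f)_v^{-1}R$. Combining, $fR=sR\,(gR)^{-1}=sR\cdot\tfrac1s\,c(f)_vR=c(f)_vR$.

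Sharpness then follows formally. By Proposition~\ref{133} the Nagata ring $D[X]_{N_v}$ is sharp, and $D\subseteq D[X]_{N_v}\subseteq R$; since every ideal of $R$ is extended from $D$ it is a fortiori extended from $D[X]_{N_v}$. Applying Lemma~\ref{101} with the sharp domain $D[X]_{N_v}$ in the role of the base and $R$ in the role of the extension, I conclude that $R=D[[X]]_{N'_v}$ is sharp, which completes the proof.

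The main obstacle is the content lemma, and the difficulty is genuinely a power series phenomenon: one cannot simply assert $f\in c(f)D[[X]]$, since $c(f)$ need not be finitely generated. The device that circumvents this is the realization of the finite-type inverse $t$-ideal $c(f)_v^{-1}$ as the content of an honest polynomial $g$, which transports the question back to Kang's polynomial content formula. The remaining technical point is justifying the fractional-ideal cancellation $(gR)^{-1}=\tfrac1s c(f)_vR$; this amounts to checking that the $t$-unit relation $(c(f)_vc(f)_v^{-1})_t=D$ extends to $(c(f)_vc(f)_v^{-1})R=R$, which holds because the maximal ideals of $R$ lie over the maximal $t$-ideals of $D$, as established in the analysis of $D[[X]]_{N'_v}$ in \cite{AK2} and \cite{L}.
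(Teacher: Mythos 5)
Your proposal is correct, and it shares the paper's overall skeleton: show that every ideal of $R=D[[X]]_{N'_v}$ is extended from $D$, hence a fortiori from $D[X]_{N_v}$; invoke Proposition \ref{133} to get that $D[X]_{N_v}$ is sharp; and apply Lemma \ref{101} to transfer sharpness up to $R$. The genuine difference is in how the extension property is obtained. The paper gets it in one stroke by quoting \cite[Theorem 4.3]{L}, which applies because $D$ is pre-Krull (Proposition \ref{3x}) and the power-series content formula $c(fg)_v=(c(f)c(g))_v$ holds; you instead prove it from scratch via the content lemma $fR=c(f)_vR$, using Proposition \ref{3a} to make $c(f)_v$ $t$-invertible of finite type, realizing $s\,c(f)_v^{-1}$ as the content of an honest polynomial $g$, applying the power-series content formula to write $fg=sh$ with $h\in N'_v$ a unit of $R$, and then using Kang's polynomial result $gD[X]_{N_v}=c(g)_vD[X]_{N_v}$ (essentially \cite[Theorem 3.14]{Kg}) to cancel. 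In effect you re-prove, in exactly the case needed, the theorem of Li that the paper cites; your argument is longer but self-contained modulo \cite{Kg}, whereas the paper's is shorter but outsources the key step to the literature. One remark on your final technical point: to justify $(c(f)_vc(f)_v^{-1})R=R$ you invoke the description of the maximal ideals of $R$ from \cite{AK2} and \cite{L}, which is heavier than necessary and, depending on how that description is derived in \cite{L}, risks leaning on the very extension theorem you are re-proving. The fact you need is elementary and can be kept internal: if $I$ is an ideal of $D$ with $I_t=D$, choose a finitely generated $H=(h_1,\dots,h_k)\subseteq I$ with $H_v=D$; then the polynomial $h_1+h_2X+\cdots+h_kX^{k-1}$ lies in $IR\cap N'_v$, so $IR$ contains a unit of $R$ and equals $R$. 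With that small repair, your proof stands on its own.
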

\begin{proof} 
We may assume that $D$ is not a field. 
By Proposition \ref{3x}, $D$ is a pre-Krull domain (alias $(t,v)$-Dedekind domain). As seen in the paragraph preceding this proposition, $c(fg)_v=(c(f)c(g))_v$
for every $f,g\in D[[X]]-\{0\}$. By  \cite[Theorem 4.3]{L}
 it follows that every ideal of $D[[X]]_{N'_v}$ is extended from $D$, then, a fortiori, from $D[X]_{N_v}$.
 By Proposition \ref{133} it follows that $D[X]_{N_v}$ is a sharp domain, hence so is $D[[X]]_{N'_v}$, cf. Lemma \ref{101}. 
\end{proof}

\begin{corollary}\label{307}
Let  $D$   be a  $t$-sharp domain. Then  $D[[X]]_{N'_v}$ is a faithfully flat $D[X]_{N_v}$-module and the extension map 
$I\mapsto ID[[X]]_{N'_v}$ 
 is a bijection from the set of  ideals of $D[X]_{N_v}$ to the set of  ideals of $D[[X]]_{N'_v}$.
\end{corollary}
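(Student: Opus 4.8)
Set $R = D[X]_{N_v}$ and $S = D[[X]]_{N'_v}$; we may assume $D$ is not a field, the field case being trivial. The plan is to establish that $S$ is faithfully flat over $R$, and then to read off the ideal bijection from faithful flatness together with Proposition~\ref{1024}. Two facts drive everything: by Proposition~\ref{133}, $R$ is a sharp domain, hence a Pr\"ufer domain by \cite[Theorem 11]{ADE}; and by Proposition~\ref{1024}, every ideal of $S$ is extended from $D$.

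First I would prove that $S$ is \emph{flat} over $R$ by a local argument. Since $R$ is Pr\"ufer, $R_\mathfrak{p}$ is a valuation domain for every prime $\mathfrak{p}$ of $R$. Flatness of the $R$-module $S$ can be tested at the maximal ideals of $S$ and reduced to the contracted primes of $R$: it suffices to show that $S_\mathfrak{q}$ is flat over $R_{\mathfrak{q}\cap R}$ for each maximal ideal $\mathfrak{q}$ of $S$. Here $R_{\mathfrak{q}\cap R}$ is a valuation domain, and $S_\mathfrak{q}$ is a domain into which $R_{\mathfrak{q}\cap R}$ injects (both lie inside the quotient field of $S$), so $S_\mathfrak{q}$ is torsion-free as an $R_{\mathfrak{q}\cap R}$-module. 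Over a valuation domain torsion-free equals flat, whence $S_\mathfrak{q}$ is flat over $R_{\mathfrak{q}\cap R}$ and $S$ is flat over $R$.

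Next I would verify the ``faithful'' part, namely $\mathfrak{m}S \neq S$ for every maximal ideal $\mathfrak{m}$ of $R$. By Kang's description of $D[X]_{N_v}$ \cite{Kg}, the maximal ideals of $R$ are exactly the $M D[X]_{N_v}$ with $M$ a maximal $t$-ideal of $D$, so $\mathfrak{m}S = MS$. If $MS = S$, then $M D[[X]]$ meets the multiplicative set $N'_v$, say $s \in M D[[X]] \cap N'_v$; writing $s = m_1 h_1 + \cdots + m_k h_k$ with $m_i \in M$ and $h_i \in D[[X]]$, every coefficient of $s$ lies in the finitely generated ideal $(m_1,\ldots,m_k) \subseteq M$, so that $c(s)_v \subseteq (m_1,\ldots,m_k)_v = (m_1,\ldots,m_k)_t \subseteq M_t = M \subsetneq D$, contradicting $c(s)_v = D$. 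Hence $\mathfrak{m}S \neq S$, and combined with flatness, $S$ is faithfully flat over $R$.

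Finally the ideal bijection. Faithful flatness yields $IS \cap R = I$ for every ideal $I$ of $R$, so the extension map $I \mapsto IS$ is injective. For surjectivity, Proposition~\ref{1024} shows that every ideal of $S$ is extended from $D$; as $D \subseteq R$, such an ideal has the form $(ER)S$ for an ideal $E$ of $D$, and thus lies in the image. Therefore $I \mapsto ID[[X]]_{N'_v}$ is a bijection from the ideals of $R$ onto the ideals of $S$, with inverse given by contraction. I expect the flatness step to be the main obstacle: one must apply the local criterion along the ring map $R \to S$ correctly, localizing $S$ at its own maximal ideals and $R$ at the contracted primes, and confirm that $S_\mathfrak{q}$ is genuinely torsion-free over the valuation domain $R_{\mathfrak{q}\cap R}$, after which the implication ``torsion-free $\Rightarrow$ flat'' over valuation domains closes the argument.
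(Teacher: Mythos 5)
Your proposal is correct and follows essentially the same route as the paper's proof: Pr\"uferness of $D[X]_{N_v}$ (via Proposition \ref{133}), flatness of $D[[X]]_{N'_v}$ from torsion-freeness over a Pr\"ufer/valuation domain, the coefficient argument $c(s)_v \subseteq (m_1,\ldots,m_k)_v \subseteq M_t \subsetneq D$ to show proper ideals extend to proper ideals (hence faithful flatness, giving $IS\cap R=I$), and surjectivity of the extension map from Proposition \ref{1024}. The only cosmetic differences are that you reach flatness by localizing at maximal ideals of $S$ rather than quoting directly that torsion-free modules over a Pr\"ufer domain are flat, and you verify faithfulness only at maximal ideals of $R$ (using Kang's description of them) where the paper checks all proper ideals; neither changes the substance.
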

\begin{proof}
Set $E=D[X]_{N_v}$ and $F=D[[X]]_{N'_v}$.
By the proof of Proposition \ref{133} it follows that $E$ is a Prufer domain. Hence 
$F$ is a flat $E$-module because over a Prufer domain every torsion-free module is flat. 
We show that every proper ideal of $E$ extends to a proper ideal of $F$. 
Let $J$ be a proper nonzero ideal of $E$. By \cite[Theorem 3.14]{Kg}, $J=IE$ for some ideal $I$ of $D$ such that $I_t\neq D$. Assume that $JF=F$. Then $IF=JF=F$, so $ID[[X]]$ contains some power series $f$ with $c(f)_v=D$. Write $f=a_1f_1+...+a_nf_n$ with $a_i \in I$ and $f_i \in D[[X]]$. Then $D=c(f)_v \subseteq  (a_1,...,a_n)_v\subseteq I_t$, so $I_t=D$, a contradiction. As $F$ is a flat $E$-module and every proper ideal of $E$ extends to a proper ideal of $F$, it follows that $F$ is a faithfully flat $E$-module, cf. \cite[Exercise 16, page 45]{AM}. In particular, $HF\cap E=H$ for each ideal $H$ of $E$. 
By Proposition \ref{1024},  every ideal of $F$ is extended from $D$ and hence from $E$ because $D\subseteq  E\subseteq  F$.
Combining these two facts, it follows that the extension map  $I\mapsto IF$ is a bijection from the set of  ideals of $E$ to the set of  ideals of $F$.
\end{proof}

{\bf Acknowledgements.} The first author was partially supported by an HEC (Higher Education Commission, Pakistan)  grant. The second author gratefully acknowledges the warm
hospitality of the Abdus Salam School of Mathematical Sciences GCU Lahore during his visits in 2006-2011. The third author was supported by UEFISCDI, project number 83/2010, PNII-RU code TE\_46/2010, program Human Resources.
\\[7mm]

\end{document}